\begin{document}
%\large
\newcommand{\per}{{\rm per}}
\newcommand{\supp}{{\rm supp}}
\newtheorem{teorema}{Theorem}
\newtheorem{lemma}{Lemma}
\newtheorem{utv}{Proposition}
\newtheorem{svoistvo}{Property}
\newtheorem{sled}{Corollary}
\newtheorem{con}{Conjecture}
\newtheorem{zam}{Remark}
\newtheorem{quest}{Question}
\newtheorem{claim}{Claim}

\author{A. A. Taranenko}
\title{Enumeration and constructions of vertices of the polytope of polystochastic matrices}
\date{}

\maketitle

\begin{abstract}
A multidimensional nonnegative matrix is called polystochastic if the sum of entries in each of its lines equals $1$. The set of all polystochastic matrices of order $n$ and dimension $d$ is a convex polytope $\Omega_n^d$ known as the Birkhoff polytope.  In this paper, we identify all vertices of the polytopes  $\Omega_4^3$ and  $\Omega_3^4$ correcting the results of Ke, Li, and Xiao (2016). Additionally, we  describe constructions vertices of $\Omega_n^d$ using multidimensional matrix  products and  find symmetric vertices of $\Omega_3^d$ for all $d \geq 4$ with large support sizes.

\textbf{Keywords:} polystochastic matrix, Birkhoff polytope, vertex, symmetric matrix, permanent

\textbf{MSC2020:} 15B51, 52B05, 15A15

\end{abstract}

\section{Introduction}

A \textit{$d$-dimensional matrix $A$ of order $n$} is an array $(a_\alpha)_{\alpha \in I^d_n}$, $a_\alpha \in\mathbb R$, its entries are indexed by $\alpha$ from the index set $I_n^d = \{  \alpha = (\alpha_1, \ldots, \alpha_d ) | \alpha_i \in \{ 0, \ldots, n-1\}\}$. The \textit{Hamming distance} $\rho(\alpha, \beta)$ between $\alpha, \beta \in I_n^d$ is the number of positions in which they differ.  A matrix $A$ is called \textit{nonnegative} if all $a_{\alpha} \geq 0$, and it is a \textit{$(0,1)$-matrix} if all its entries are $0$ or $1$.  The \textit{support} $\supp(A)$ of a matrix $A$ is the set of all indices $\alpha$ such that $a_{\alpha} \neq 0$. Denote the  cardinality of the support of $A$ by $N(A)$.

Given $k\in \left\{0,\ldots,d\right\}$, a \textit{$k$-dimensional plane} in $A$ is the submatrix  obtained by fixing $d-k$ positions in  the indices and letting  the values in the other $k$ positions vary from $1$ to $n$. We will say that the set of fixed positions defines the \textit{direction} of a plane, and planes with the same direction are \textit{parallel}.   A $1$-dimensional plane is said to be a \textit{line}, and a $(d-1)$-dimensional plane is a \textit{hyperplane}. Note that each  $d$-dimensional matrix of order $n$ has $dn$ hyperplanes and  $dn^{d-1}$ lines.  

Matrices $A$ and $B$ are called \textit{equivalent} if one can be obtained from the other by permuting  hyperplanes of the same direction and/or  permuting the  directions of hyperplanes.

For a $d$-dimensional matrix $A$ of order $n$ and an index $\alpha \in I_{n}^d$, let $A_{\alpha}$ denote the $d$-dimensional matrix of order $n-1$  obtained  by deleting all hyperplanes containing the index $\alpha$. 

A multidimensional nonnegative matrix $A$ is called \textit{polystochastic} if the sum of its entries in each line equals $1$. Polystochastic matrices of dimension $2$ are known as \textit{doubly stochastic} matrices.  Since doubly stochastic $(0,1)$-matrices are exactly the permutation matrices,  for  $d \geq 3$  we   call  $d$-dimensional polystochastic $(0,1)$-matrices \textit{multidimensional permutations}.

A matrix $A$ is a  \textit{convex combination} of  matrices $B_1, \ldots, B_k$ if there exist coefficients  $\lambda_1, \ldots, \lambda_k$, $0 < \lambda_i < 1$, $\lambda_1 + \cdots + \lambda_k = 1$ such that 
$$A = \lambda_1 B_1 + \cdots + \lambda_k B_k.$$ 

Let $\Omega_n^d$ denote the set of  all $d$-dimensional polystochastic matrices of order $n$. Note that  any convex combination of matrices from  $\Omega_n^d$  is itself  a $d$-dimensional polystochastic matrix of order $n$. So the set $\Omega_n^d$  is a convex polytope called the \textit{Birkhoff polytope}.  

A matrix $A \in \Omega_n^d$ is a \textit{vertex} of the Birkhoff polytope $\Omega_n^d$ if,  for every convex combination $A = \lambda_1 B_1 + \cdots + \lambda_k B_k$,  all $B_i$ are identical to $A$. In other words, vertices  of  the  polytope $\Omega_n^d$ cannot be expressed as  nontrivial convex combinations of its  elements. The definition implies that every  $d$-dimensional polystochastic matrix of order $n$  is a convex combination of vertices of $\Omega_n^d$, and every $d$-dimensional permutation of order $n$ is a vertex of $\Omega_n^d$.  

The problem of determining all vertices of the Birkhoff polytope  was first raised by Jurkat and Ryser in~\cite{JurRys.stochmatr}.

The well-known Birkhoff theorem states that every vertex of the polytope of doubly stochastic matrices is a permutation matrix.  

It is not hard to prove (see, for example,~\cite{my.obzor})  that the polytope $\Omega_2^d$ of $d$-dimensional polystochastic matrices of order $2$ has dimension $1$,  and  all vertices of $\Omega_2^d$ are multidimensional permutations. 

The smallest example of a vertex  that is not  a multidimensional permutation  is the following $3$-dimensional polystochastic matrix of order $3$

$$
V = \left( \begin{array}{ccc|ccc|ccc}
1 & 0 & 0 & 0 & \nicefrac{1}{2} & \nicefrac{1}{2} & 0 & \nicefrac{1}{2} & \nicefrac{1}{2} \\
0 & \nicefrac{1}{2} & \nicefrac{1}{2} & \nicefrac{1}{2} & \nicefrac{1}{2} & 0 & \nicefrac{1}{2} & 0 & \nicefrac{1}{2} \\
0 & \nicefrac{1}{2} & \nicefrac{1}{2} & \nicefrac{1}{2} & 0 & \nicefrac{1}{2} & \nicefrac{1}{2} & \nicefrac{1}{2} & 0 \\
\end{array}
\right).
$$

This vertex of $\Omega_3^3$ was discovered in papers~\cite{ChanPakZha.polystoch,csima.multmatr, CuiLiNg.Birkfortensor,LinLur.birvert, FichSwart.3dimstoch, KeLiXiao.extrpoints, WangZhang.permtensor}, among others. A computer search conducted  in~\cite{FichSwart.3dimstoch} and~\cite{ ChanPakZha.polystoch}  showed that there are only two nonequivalent vertices of the $\Omega_3^3$: the $3$-dimensional permutation of order $3$ and the matrix $V$. 

For other Birkhoff polytopes  $\Omega_n^d$, the sets of their vertices are only partially known. Although Ke, Li, and Xiao in~\cite{KeLiXiao.extrpoints}   claimed to have found all (35) nonequivalent vertices of the polytope $\Omega_4^3$, their enumeration method does not  list all vertices of this polytope. In the present paper, we show that the polytope  $\Omega_4^3$ has 533 nonequivalent vertices, most of which were missed in the previous enumeration. 

Typically,  constructions  of vertices of the Birkhoff polytope $\Omega_n^d$ different from multidimensional permutations yield polystochastic matrices with no more than two nonzero entries in each line. This class of matrices has attracted special attention due to its relative simplicity: to prove that such  a matrix is a vertex of $\Omega_n^d$ it suffices to show that it is not a convex combination of multidimensional permutations. This class of vertices appears, for example, in papers \cite{CuiLiNg.Birkfortensor, FichSwart.3dimstoch}. Moreover,  Linial and Luria  in~\cite{LinLur.birvert}    improved the lower bound on the number of vertices of   $\Omega_n^3$ by  constructing many vertices  with exactly two nonzero entries in each line.   Additional details on known lower and upper bounds on the number of vertices of $\Omega_n^d$ can be found in~\cite{PotTar.polyvertbounds}.

It is evident  that the minimum size of the support of matrices from $\Omega_n^d$ is achieved by multidimensional permutations. Fischer and Swart~\cite{FichSwart.3dimstoch} asked what  the next minimum value for the support of matrices in $\Omega_n^3$ is, but this question remains unanswered.
For $3$-dimensional polystochastic matrices, Jurkat and Ryser in~\cite{JurRys.stochmatr}  established  the upper bound on the size of the support, which  was later generalized by the author in~\cite{my.obzor}  for polystochastic matrices of any dimension and nonnegative multidimensional matrices with equal sums in $k$-dimensional planes.

\begin{utv}[\cite{JurRys.stochmatr,my.obzor}] \label{supportbound}
Let $A$ be a vertex of the polytope of $d$-dimensional polystochastic matrices  of order $n$. Then for  the size $N(A)$  of the support of $A$ we have
$$N(A) \leq n^d - (n-1)^d.$$
\end{utv}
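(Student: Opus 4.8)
The plan is to combine the standard perturbation characterisation of vertices with a rank computation for the system of line-sum equations. First I would record that a matrix $A \in \Omega_n^d$ is a vertex of $\Omega_n^d$ if and only if there is no nonzero $d$-dimensional real matrix $B$ of order $n$ with $\supp(B) \subseteq \supp(A)$ all of whose line sums equal $0$. Indeed, given such a $B$, for sufficiently small $\varepsilon > 0$ the matrices $A \pm \varepsilon B$ both belong to $\Omega_n^d$ (the positive entries of $A$ stay positive, the entries outside $\supp(A)$ remain $0$, and every line sum stays equal to $1$), so $A = \tfrac{1}{2}(A+\varepsilon B) + \tfrac{1}{2}(A-\varepsilon B)$ is a nontrivial convex combination. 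Conversely, in any convex combination $A = \sum_i \lambda_i B_i$ with $B_i \in \Omega_n^d$ and all $\lambda_i > 0$, nonnegativity forces $\supp(B_i) \subseteq \supp(A)$, and then each difference $B_i - A$ is a matrix of the forbidden type, hence $B_i = A$.

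Next I would turn this into a dimension inequality. Let $L$ be the linear map sending an arbitrary $d$-dimensional matrix of order $n$ to the vector of its $dn^{d-1}$ line sums, and let $L_S$ be its restriction to the space of matrices supported on $S := \supp(A)$, a space of dimension $N(A)$. The criterion above says exactly that, for a vertex $A$, the map $L_S$ is injective; therefore $N(A) = \mathrm{rank}\,L_S \le \mathrm{rank}\,L = n^d - \dim \ker L$ by rank--nullity. So the whole statement reduces to showing that $\ker L$ — the space of $d$-dimensional matrices of order $n$ with all line sums zero — has dimension exactly $(n-1)^d$.

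For this last point I would identify the space of $d$-dimensional matrices of order $n$ with the tensor power $(\mathbb{R}^n)^{\otimes d}$; under this identification, summing along all lines in a fixed direction $i$ is the contraction of the $i$-th tensor factor against the all-ones vector $\mathbf{1} \in \mathbb{R}^n$, so ``all lines in direction $i$ sum to $0$'' means the matrix lies in the subspace obtained by replacing the $i$-th factor $\mathbb{R}^n$ by $W := \{v \in \mathbb{R}^n : \sum_j v_j = 0\}$. Using the decomposition $\mathbb{R}^n = W \oplus \mathbb{R}\mathbf{1}$ and expanding $(\mathbb{R}^n)^{\otimes d}$ into the resulting $2^d$ summands, the $i$-th of these subspaces is the sum of the summands carrying $W$ in position $i$, so the intersection over all $i = 1, \dots, d$ is precisely $W^{\otimes d}$, whence $\dim \ker L = (\dim W)^d = (n-1)^d$. (Alternatively the same count follows by induction on $d$, slicing a matrix along one coordinate: the $n-1$ slices indexed by the nonzero values of that coordinate may be prescribed freely among $(d-1)$-dimensional order-$n$ matrices with vanishing line sums, and the remaining slice is then forced.) Combining this with the previous paragraph gives $N(A) \le n^d - (n-1)^d$.

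The first two steps are essentially formal; the real work is the kernel computation, and the one point that needs care is that the intersection of the $d$ coordinate subspaces is no larger than $W^{\otimes d}$ — which is exactly what the direct-sum decomposition $\mathbb{R}^n = W \oplus \mathbb{R}\mathbf{1}$ makes transparent.
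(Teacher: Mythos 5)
Your proposal is correct. The reduction of ``vertex'' to ``no nonzero zero-sum matrix with support inside $\supp(A)$'' is exactly the criterion the paper records as Proposition~\ref{JRtradecrit}, and from there you correctly translate it into injectivity of the restricted line-sum map $L_S$, so that rank--nullity reduces everything to $\dim\ker L=(n-1)^d$. That kernel count is the crux, and your tensor argument handles it cleanly: writing $\mathbb{R}^n=W\oplus\mathbb{R}\mathbf{1}$ gives the direct-sum decomposition of $(\mathbb{R}^n)^{\otimes d}$ into $2^d$ summands, the kernel of the $i$-th contraction is precisely the sum of the summands carrying $W$ in position $i$, and an intersection of sums of summands in a fixed direct-sum decomposition is the sum over the intersection of index sets, yielding $W^{\otimes d}$ exactly. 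The inductive alternative (free choice of $n-1$ hyperplanes forcing the last) is also valid and gives the same count. The paper itself does not reproduce a proof of this proposition but cites Jurkat--Ryser and the author's survey; your argument is the standard linear-algebraic route those sources take, so this is essentially the same approach rather than a genuinely different one.
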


Among  all vertices of $\Omega_n^d$, some have significant symmetries.  A  $d$-dimensional matrix of order $n$ is called  \textit{symmetric} if for every permutation $\sigma \in S_d$ we  have $a_{\alpha} = a_{\sigma(\alpha)}$, where $\sigma(\alpha) = (\alpha_{\sigma(1)}, \ldots, \alpha_{\sigma(d)})$.  In other words, a matrix $A$ is symmetric if any transpose does not change the matrix.  
 The set of $d$-dimensional symmetric polystochastic matrices of order $n$ is also a convex polytope, but not every vertex of the polytope of symmetric polystochastic matrices is a vertex of $\Omega_n^d$.  For example,  adjacency matrices of regular graphs that do not contain perfect matchings yield matrices different from permutations that are vertices of the polytope of symmetric doubly stochastic matrices.

Historically, the study  of the polytope of doubly stochastic matrices was motivated by  van der Waerden's conjecture  on the minimum value of the permanent (see~\cite{minc.perbook}). Vertices of the Birkhoff polytope $\Omega_n^d$ also play a crucial role in the problem of the positiveness of the permanent of polystochastic matrices.  The \textit{permanent} of a $d$-dimensional matrix $A$ of order $n$ is defined as 
$$\per A = \sum\limits_{\ell \in D(A)} \prod\limits_{\alpha \in \ell} a_{\alpha}, $$
 where $D(A)$ is the set of all diagonals of $A$, and a \textit{diagonal} in $A$ is a collection of indices $\{ \alpha^1, \ldots, \alpha^n \}$ such that no indices share the same hyperplane.

The  Birkhoff theorem  implies  that  the permanent of every doubly stochastic matrix is positive.  For odd $d \geq 3$ and even $n$, there exist examples of $d$-dimensional permutations of order $n$ (which are vertices of $\Omega_n^d$) with  zero permanents. Such permutations can be found, for example, in the survey \cite{wanless.surv} by Wanless.  For other values of $n$ and $d$, no $d$-dimensional polystochastic matrices of order $n$ with zero permanents are known.    Thus  the author proposed  the following conjecture  in~\cite{my.obzor}:

\begin{con}[\cite{my.obzor}]
Every polystochastic matrix of even dimension or odd order has a positive permanent.
\end{con}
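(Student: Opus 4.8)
The plan is to split the conjecture into a reduction to vertices and a purely combinatorial statement about them. For the reduction, observe that $\per$ is a homogeneous polynomial of degree $n$ in the entries of a matrix with nonnegative coefficients, since each diagonal $\ell$ contributes the squarefree monomial $\prod_{\alpha\in\ell}a_\alpha$. Hence $\per(X+Y)\geq\per(X)$ for all nonnegative matrices $X,Y$, and by homogeneity, if $A\in\Omega_n^d$ is written as a convex combination $A=\lambda_1B_1+\cdots+\lambda_kB_k$ of vertices with all $\lambda_i>0$, then
$$\per(A)\ \geq\ \per(\lambda_1B_1)\ =\ \lambda_1^{\,n}\,\per(B_1).$$
Thus $\per(A)>0$ as soon as $\per(B_1)>0$, and the conjecture is reduced to showing that the support of \emph{every} vertex of $\Omega_n^d$ contains a diagonal whenever $d$ is even or $n$ is odd.

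To attack the reduced statement I would treat multidimensional permutations first, since for such a vertex a positive diagonal is precisely a transversal of the associated Latin hypercube, and the hypothesis on the parities of $d$ and $n$ is exactly what removes the most basic obstruction: for $d=3$ the permutation built from the group $\mathbb{Z}_n$ has no positive diagonal exactly when $n$ is even. This points to an algebraic approach --- reduce $\per$ modulo a small prime and apply the combinatorial nullstellensatz or an Alon--Tarsi-type signed enumeration to produce a nonvanishing witness, using the parity hypothesis to exclude the identically-zero residue forced by a group symmetry. For a general vertex $A$, which by Proposition~\ref{supportbound} may carry as many as $n^d-(n-1)^d$ nonzero entries, I would try an induction on $n$ through the contraction $A\mapsto A_\alpha$ at an index $\alpha$ with $a_\alpha>0$, fed by the structural information on the supports of vertices coming from the matrix-product constructions of this paper; an alternative is to aim directly at a quantitative van der Waerden-type bound $\per(A)\geq c(n,d)>0$ by extending the capacity and stable-polynomial arguments of Gurvits and of Egorychev and Falikman from the two-dimensional Birkhoff polytope.

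The principal difficulty is that no currently available device \emph{forces} a positive diagonal at this level of generality. The permutation subcase with $d=3$ and $n$ odd already coincides with the Ryser--Brualdi transversal conjecture (see~\cite{wanless.surv}), which is open, so a complete proof must subsume it; and for even $d\geq 4$ the analogous transversal problems for Latin hypercubes are likewise unresolved. Moreover the contraction $A_\alpha$ is not polystochastic, so an induction on $n$ must be equipped with extra invariants to stay inside a class on which positivity can be propagated, and the known lower-bound machinery for the permanent does not obviously survive the passage from dimension $2$ to higher dimensions. Realistically I therefore expect this programme to yield: a proof of the conjecture in the small cases covered by the vertex classifications of $\Omega_4^3$ and $\Omega_3^4$; a verification that all vertices obtained from the product constructions and from the symmetric families of $\Omega_3^d$ have positive permanents; and a reduction of the general conjecture to a Ryser--Brualdi-type statement about diagonals inside the supports of vertices.
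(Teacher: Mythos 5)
The statement you were asked about is not a theorem of the paper but a conjecture: the paper states it (attributing it to the author's earlier survey) and explicitly leaves it open, offering only one observation, namely that if $A=\lambda_1B_1+\cdots+\lambda_kB_k$ is a convex combination of vertices then $\per A=0$ forces $\per B_i=0$ for all $i$, so the conjecture reduces to the claim that every vertex of $\Omega_n^d$ has positive permanent when $d$ is even or $n$ is odd. Your first paragraph reproduces exactly this reduction, and your quantitative version $\per(A)\geq\lambda_1^{\,n}\per(B_1)$, justified by monotonicity of the permanent on nonnegative matrices, is correct and in fact slightly sharper than the remark in the paper.

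Beyond that, your text is a research programme rather than a proof, and you say so yourself: you correctly note that the reduced statement already contains, in the subcase of $3$-dimensional permutations of odd order, the Ryser conjecture on transversals of Latin squares (the $\mathbb{Z}_n$ Cayley table example you cite is the standard witness that the parity hypothesis is necessary), so no complete argument can be extracted from the tools at hand. This matches the status of the problem in the paper, which proves nothing stronger than the reduction and instead contributes enumerations ($\Omega_4^3$, $\Omega_3^4$) and constructions (Kronecker and dot products, the symmetric vertices of $\Omega_3^d$) whose permanents are computed or can be checked to be positive --- precisely the ``realistic outcomes'' you list. One caution: the inductive route through the contraction $A\mapsto A_\alpha$ and the appeal to capacity/stable-polynomial bounds are, as you anticipate, not known to survive the passage to $d\geq 3$ (the paper's zero-permanent permutations for odd $d$, even $n$ show any such bound must see the parity of $d$ and $n$), so these should be regarded as speculative directions rather than steps of a proof. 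In short: your reduction agrees with the paper, and the remaining gap is not a flaw of your write-up but the open problem itself.
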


If a  matrix $A \in \Omega_n^d$ is a convex combination  of vertices $B_1, \ldots, B_k$, then  $A$ has a zero permanent only if  all matrices $B_1, \ldots, B_k$ also have zero permanents.  Hence,  the above conjecture can be reduced to the assertion that  all vertices of $\Omega_n^d$ have a positive permanent if $d$ is even or $n$ is odd.

The main aim of the present paper is to enumerate all vertices of the Birkhoff polytope $\Omega_n^d$ for small $n$ and $d$ and provide explicit constructions of some vertices. For the constructed vertices, we pay  special attention to their permanents and the size and structure of their supports. In particular, in Section~\ref{enumersec}  we describe  the algorithm for  enumerating vertices of the polytopes $\Omega_3^4$ and $\Omega_4^3$ and provide some statistics on   sizes of their supports  and values of entries. The complete list of the vertices of $\Omega_3^4$ is given in the Appendix, while the list of vertices of $\Omega_4^3$ is available at~\cite{vert34.data}. 

In the second part of the paper, we  propose several constructions of vertices of $\Omega_n^d$ for growing $n$ or $d$. In Section~\ref{prodconstrsec}, we consider two constructions of vertices based on the multidimensional matrix multiplication.  In Section~\ref{symvertsec} we present a symmetric vertex of $\Omega_3^d$ for all $d \geq 4$. Finally, in Section~\ref{discussec}, we connect our study with the more general problem of enumerating vertices of the polytope of fractional transversals in a hypergraph.

\section{Enumeration of vertices} \label{enumersec}

We start with the necessary background for enumerating the vertices of the polytope $\Omega_n^d$. 

First, Jurkat and Ryser in~\cite{JurRys.stochmatr} prove that  there is a one-to-one correspondence between the vertices of the polytope $\Omega_n^d$ and their supports.

\begin{utv}[\cite{JurRys.stochmatr}] \label{vertbysupp}
Let $T$ be a $d$-dimensional $(0,1)$-matrix of order $n$ and  $A \in \Omega_n^d$ be such that $\supp(A) = \supp(T)$. There is a unique matrix $A$ satisfying this property if and only if $A$ is a vertex of $\Omega_n^d$.  
\end{utv}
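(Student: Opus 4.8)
The plan is to invoke the standard description of the vertices of a polyhedron through its tight (active) constraints, specialized to $\Omega_n^d$. The polytope sits inside the space of $d$-dimensional matrices of order $n$, cut out by the affine equalities asserting that every line sum equals $1$ together with the inequalities $a_\alpha \ge 0$. At a point $A \in \Omega_n^d$ the active inequalities are precisely those with $\alpha \notin \supp(A)$, so the content of the statement is the familiar dichotomy: $A$ is a vertex exactly when the active inequalities, together with the line-sum equalities, pin $A$ down, i.e.\ exactly when no other polystochastic matrix shares its support. I would run this through the elementary midpoint criterion: $A$ fails to be a vertex if and only if there is a nonzero matrix $C$ with $A+C,\ A-C \in \Omega_n^d$ (such a $C$ automatically has all line sums $0$). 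Checking that this criterion is equivalent to the paper's definition of a vertex is a short, routine manipulation of convex combinations.

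For the direction ``vertex $\Rightarrow$ uniqueness'', suppose $A$ is a vertex with $\supp(A)=\supp(T)$, and let $A' \in \Omega_n^d$ also satisfy $\supp(A')=\supp(T)$. Set $C = A'-A$, so $C$ has vanishing line sums and is supported inside $\supp(T)$. The crucial observation is that every entry of $A$ indexed by $\supp(T)$ is strictly positive, so for a sufficiently small $t>0$ the matrices $A \pm tC$ are still nonnegative, hence lie in $\Omega_n^d$; since $A$ is their midpoint and a vertex, the criterion forces $C=0$, i.e.\ $A'=A$.

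For the converse ``uniqueness $\Rightarrow$ vertex'', assume $A$ is the only polystochastic matrix with support $\supp(T)$ but, for contradiction, that $A$ is not a vertex. The midpoint criterion yields a nonzero $C$ with $A \pm C \in \Omega_n^d$; evaluating the inequalities $a_\alpha \pm c_\alpha \ge 0$ at indices $\alpha$ with $a_\alpha=0$ shows $\supp(C) \subseteq \supp(A)$. Then $A+tC$ is a convex combination of $A+C$ and $A-C$, hence lies in $\Omega_n^d$ for $t \in [-1,1]$, and, again using that the entries of $A$ on $\supp(A)$ are strictly positive while $C$ vanishes off $\supp(A)$, one gets $\supp(A+tC)=\supp(A)=\supp(T)$ for all sufficiently small $t$. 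For such $t \neq 0$ this is a polystochastic matrix different from $A$ with the same support, contradicting uniqueness; hence $A$ is a vertex.

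I do not expect a genuine obstacle here: this is the usual vertex/tight-constraint argument for polyhedra. The only point that needs a little attention is the converse, where from a generic nonzero perturbation $C$ one must manufacture a competitor whose support equals $\supp(T)$ exactly, not merely is contained in it; the strict positivity of the entries of $A$ on its own support provides the room for this by shrinking $t$, and this same strict-positivity remark is what makes the forward direction go through as well.
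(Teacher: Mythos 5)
Your argument is correct and essentially complete: the reduction of the paper's definition of a vertex to the midpoint criterion is indeed routine, the forward direction correctly exploits strict positivity of $A$ on $\supp(T)$ (and that $C=A'-A$ vanishes off $\supp(T)$ and has zero line sums) to perturb within $\Omega_n^d$, and the converse correctly extracts $\supp(C)\subseteq\supp(A)$ from the inequalities at zero entries and then shrinks $t$ to keep the support equal to $\supp(T)$. Note that the paper itself gives no proof of this proposition; it is quoted from Jurkat and Ryser, so there is no in-paper argument to compare against --- your proof is the standard polyhedral tight-constraint/perturbation argument and stands on its own.
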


Proposition~\ref{vertbysupp} implies  that  enumerating all vertices of $\Omega_n^d$ reduces  identifying all of their supports.  Note that  the size   $N(A)$  of the support of a vertex $A$ of $\Omega_n^d$  is at least  $n^{d-1}$ (the size of the support of a multidimensional permutation) and, by Proposition~\ref{supportbound},  is not greater than $n^d - (n-1)^d$.  So   the set of  all subsets   $S$  of $I_n^d$  with $n^{d-1} \leq |S| \leq n^{d} - (n-1)^d$  contains all  supports of vertices of $\Omega_n^d$. Moreover, if $S \subseteq I_n^d$ is a support of some $d$-dimensional  polystochastic matrix of order $n$, then it satisfies the following necessary conditions:

(C0) every line contains at least one index from $S$;

(C1) if a line $\ell$  contains only one index $\alpha \in S$, then for every other line $\ell'$ with $\alpha \in \ell'$,  we have  $\ell' \cap S = \{ \alpha\}$. 

It is not hard to see that  these  conditions are not sufficient.  
For the $2$-dimensional  case,   the following characterization of supports  of doubly stochastic matrices is known (see, e.g., \cite{SinKnop.totalsup}, where such matrices are said to be matrices with total support).

\begin{utv} \label{totalsup}
Let $T$ be  a $2$-dimensional $(0,1)$-matrix   of order $n$. There is a doubly stochastic matrix $A$ such that $\supp(A) = \supp(T)$ if and only if for every $t_{i,j} = 1$ we have $\per T_{i,j} > 0$.
\end{utv}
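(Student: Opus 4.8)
The plan is to deduce both implications from the Birkhoff theorem, which (as recalled above) says that every doubly stochastic matrix is a convex combination of permutation matrices. First I would unwind the notation in the two-dimensional case. Here $T_{i,j}$ is the matrix of order $n-1$ obtained from $T$ by deleting row $i$ and column $j$, its diagonals are the collections $\{(i',\tau(i'))\}$ for bijections $\tau$ between $\{0,\dots,n-1\}\setminus\{i\}$ and $\{0,\dots,n-1\}\setminus\{j\}$, and so $\per T_{i,j}>0$ holds precisely when some such $\tau$ has $t_{i',\tau(i')}=1$ for all $i'\neq i$. Extending $\tau$ by $i\mapsto j$, this is equivalent to saying that there is a permutation $\sigma$ of $\{0,\dots,n-1\}$ with $\sigma(i)=j$ whose permutation matrix $P_\sigma$ satisfies $\supp(P_\sigma)\subseteq\supp(T)$ and $(P_\sigma)_{i,j}=1$; in words, the entry $(i,j)$ lies on a \emph{positive diagonal} of $T$. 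This translation is the only part of the argument requiring any care.

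For the implication ($\Rightarrow$), suppose $A$ is doubly stochastic with $\supp(A)=\supp(T)$. By the Birkhoff theorem write $A=\sum_m \lambda_m P_{\sigma_m}$ with all $\lambda_m>0$; then $\supp(P_{\sigma_m})\subseteq\supp(A)=\supp(T)$ for every $m$. Fix $(i,j)$ with $t_{i,j}=1$, so $a_{i,j}>0$. Since $a_{i,j}=\sum_m\lambda_m (P_{\sigma_m})_{i,j}$, at least one $\sigma_m$ satisfies $\sigma_m(i)=j$; by the reduction of the previous paragraph this exhibits a positive diagonal of $T$ through $(i,j)$, hence $\per T_{i,j}\geq 1>0$.

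For the implication ($\Leftarrow$), suppose $\per T_{i,j}>0$ for every $(i,j)$ with $t_{i,j}=1$. For each such $(i,j)$ choose, via the reduction above, a permutation matrix $P^{(i,j)}$ with $\supp(P^{(i,j)})\subseteq\supp(T)$ and $(P^{(i,j)})_{i,j}=1$. Let $A$ be the average of these finitely many permutation matrices with equal positive weights. As a convex combination of doubly stochastic matrices, $A$ is doubly stochastic; $\supp(A)\subseteq\supp(T)$ because each summand has support inside $\supp(T)$; and $\supp(A)\supseteq\supp(T)$ because, for every $(i,j)$ with $t_{i,j}=1$, the matrix $P^{(i,j)}$ contributes a positive amount to $a_{i,j}$. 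Therefore $\supp(A)=\supp(T)$, as required.

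The proof is short, and the only real obstacle is bookkeeping: correctly identifying $\per T_{i,j}>0$ with the statement that $(i,j)$ lies on a permutation matrix contained in $\supp(T)$, after which both directions follow from the Birkhoff theorem together with the elementary observation that the supports of the permutation matrices occurring in (or assembled into) a convex combination are exactly controlled. In particular no separate appeal to Hall's or König's theorem is needed beyond what is already packaged inside Birkhoff's theorem.
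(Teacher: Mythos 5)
Your proof is correct, and the necessity direction is essentially the same as the paper's (both invoke the Birkhoff theorem). The sufficiency direction, however, takes a genuinely different route. The paper constructs the doubly stochastic matrix explicitly: it sets $a_{i,j} = \per T_{i,j}/\per T$ when $t_{i,j}=1$ and $a_{i,j}=0$ otherwise, and the double-stochasticity then follows from the Laplace-type expansion of the permanent along a row or column, $\per T = \sum_j t_{i,j}\per T_{i,j}$; the hypothesis $\per T_{i,j}>0$ guarantees that the support is not lost. You instead select, for each $(i,j)\in\supp(T)$, a permutation matrix $P^{(i,j)}$ contained in $\supp(T)$ with a $1$ at $(i,j)$, and average them. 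Both arguments are valid. Your version is more elementary (no permanent expansion required) and is in fact the same averaging device the paper uses in the subsequent proposition characterizing supports of $d$-dimensional polystochastic matrices in terms of vertices of $\Omega_n^d$, so it generalizes more transparently; the paper's version has the advantage of producing a specific canonical witness with entries given by a closed formula in terms of permanents, which fits the paper's broader theme of connecting the Birkhoff polytope to permanents.
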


\begin{proof}
The necessity follows from the Birkhoff theorem stating  that all vertices of the polytope $\Omega_n^2$ are permutation matrices. 

To prove sufficiency, note that the matrix $A$ with entries $a_{i,j} = \frac{\per T_{i,j}}{\per T}$, if $m_{i,j} = 1$, and $a_{i,j} = 0$, if $t_{i,j} = 0$, is a doubly stochastic matrix such that $\supp(A) = \supp(T)$.
\end{proof}

For the similar result on the supports of  polystochastic matrices, the complete enumeration of all vertices of $\Omega_n^d$ is required.

\begin{utv}
A  $d$-dimensional $(0,1)$-matrix $T$ of order $n$ has a support of some polystochastic  matrix $A$ if and only if for every $\alpha \in \supp (T)$ there exists a vertex $B$ of $\Omega_n^d$ with $\alpha \in \supp (B) $.
\end{utv}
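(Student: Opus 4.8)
A preliminary remark on the statement: it is meant to be read with the containment $\supp(B)\subseteq\supp(T)$, that is, the claim is that $\supp(A)=\supp(T)$ for some $A\in\Omega_n^d$ if and only if every $\alpha\in\supp(T)$ lies in the support of some vertex $B$ of $\Omega_n^d$ with $\supp(B)\subseteq\supp(T)$. (Without this restriction the right‑hand side would hold vacuously, since every index of $I_n^d$ lies in the support of a suitable multidimensional permutation, e.g.\ one given by a linear equation on the coordinates.) The plan is to derive both implications from two facts: (i) the matrices of $\Omega_n^d$ are nonnegative, so in any convex combination $A=\lambda_1 B_1+\cdots+\lambda_k B_k$ with all $\lambda_i>0$ no cancellation of entries occurs, whence $\supp(A)=\supp(B_1)\cup\cdots\cup\supp(B_k)$; and (ii) every polystochastic matrix is a convex combination of vertices of $\Omega_n^d$, as already noted in the introduction.

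For necessity, I would take $A\in\Omega_n^d$ with $\supp(A)=\supp(T)$, write it by (ii) as a convex combination of vertices $B_1,\ldots,B_k$ of $\Omega_n^d$, and apply (i). This gives $\supp(B_i)\subseteq\supp(A)=\supp(T)$ for every $i$ together with $\supp(B_1)\cup\cdots\cup\supp(B_k)=\supp(T)$, so each $\alpha\in\supp(T)$ belongs to $\supp(B_i)$ for at least one $i$, and that $B_i$ is the required vertex.

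For sufficiency, I would pick for each $\alpha\in\supp(T)$ a vertex $B^{\alpha}$ of $\Omega_n^d$ with $\alpha\in\supp(B^{\alpha})\subseteq\supp(T)$ and average these matrices with equal weights,
$$A=\frac{1}{N(T)}\sum_{\alpha\in\supp(T)}B^{\alpha}.$$
Being a convex combination of elements of $\Omega_n^d$, the matrix $A$ again lies in $\Omega_n^d$, and by (i) its support is $\bigcup_{\alpha\in\supp(T)}\supp(B^{\alpha})$. Each term of this union is contained in $\supp(T)$, while the $\alpha$‑th term contains $\alpha$; hence the union is exactly $\supp(T)$, so $\supp(A)=\supp(T)$.

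I do not expect any genuine obstacle here: the only step deserving explicit mention is the no‑cancellation identity (i) for supports of convex combinations of nonnegative matrices, which is immediate. I would also point out, in parallel with the comment following Proposition~\ref{totalsup}, that in contrast to the two‑dimensional criterion this one is not effective on its own, since applying it presupposes a complete list of the vertices of $\Omega_n^d$ — equivalently, by Proposition~\ref{vertbysupp}, of their supports — which is precisely what the present section provides for $\Omega_3^4$ and $\Omega_4^3$.
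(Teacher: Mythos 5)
Your proof is correct and uses essentially the same argument as the paper: necessity from the decomposition of $A$ into a convex combination of vertices together with the no‑cancellation property of supports of nonnegative convex combinations, and sufficiency by averaging vertices whose supports lie inside $\supp(T)$. The paper averages over the \emph{entire} set $\mathcal{V}(T)$ of vertices $B$ with $\supp(B)\subseteq\supp(T)$ rather than one chosen vertex per index, but that is an inessential variation. Your preliminary remark is also well taken and in fact points at a small imprecision in the statement: as written, the right‑hand side omits the condition $\supp(B)\subseteq\supp(T)$, without which it would hold vacuously (every index lies in the support of some $d$‑dimensional permutation); the paper's own sufficiency argument silently uses this restriction by working with $\mathcal{V}(T)$, so your reading is the intended one.
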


\begin{proof}
The necessity follows from the definition of the polytope $\Omega_n^d$. 

Let us prove sufficiency. Consider the set $\mathcal{V} (T)$ of all vertices $B$ of $\Omega_n^d$ such that $\supp(B) \subseteq \supp (T)$. Then the matrix $A = \frac{1}{|\mathcal{V}(T)|} \sum\limits_{B \in \mathcal{V}(T)} B $ is a polystochastic matrix. Since for every $\alpha \in \supp(T)$ there exists $B \in \mathcal{V}(T)$ such that $\alpha \in \supp(B)$, we have that $\supp(A) = \supp (T)$. 
\end{proof}

We can also characterize supports of vertices of polystochastic matrices as sets of indices that  avoid  some special subsets.   A \textit{zero-sum matrix}  is defined as a  real-valued $d$-dimensional matrix $B$ of order $n$  in which  the sum of entries in each line is  zero.  If a  set $R \subseteq I_n^d$  is a support of some zero-sum matrix, we will say that $R$  is a \textit{zero-sum set}.

\begin{utv}[\cite{JurRys.stochmatr}] \label{JRtradecrit}
A $d$-dimensional polystochastic matrix $A$ of order $n$ is a vertex of $\Omega_n^d$ if and only if there are no nonempty zero-sum sets $R$ in $I_n^d$ such that $R \subseteq \supp(A)$. 
\end{utv}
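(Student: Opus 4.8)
The plan is to prove both implications by elementary perturbation arguments, each by contrapositive. For necessity I will start from a nonempty zero-sum set inside $\supp(A)$ and exhibit a nontrivial convex decomposition of $A$; for sufficiency I will start from a nontrivial convex decomposition of $A$ and extract a nonempty zero-sum set inside $\supp(A)$. Throughout I will use only the definitions of polystochastic matrix, zero-sum set, and vertex, together with the fact that line sums are additive.

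For necessity, suppose there is a nonempty zero-sum set $R \subseteq \supp(A)$, and let $B$ be a zero-sum matrix with $\supp(B) = R$. Since $B \neq 0$ and every index of $\supp(B)$ lies in $\supp(A)$, one can pick $\varepsilon > 0$ small enough — for instance $\varepsilon = \bigl(\min_{\alpha \in \supp(A)} a_\alpha\bigr) / \bigl(\max_{\beta \in I_n^d} |b_\beta|\bigr)$ — so that $A + \varepsilon B$ and $A - \varepsilon B$ are both nonnegative: at indices outside $\supp(A)$ the entry of $B$ vanishes (as such indices are not in $R$), and at indices of $\supp(A)$ the choice of $\varepsilon$ keeps the entries $\geq 0$. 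Because every line sum of $B$ is $0$, every line sum of $A \pm \varepsilon B$ is still $1$, so both matrices lie in $\Omega_n^d$. Then $A = \tfrac12(A + \varepsilon B) + \tfrac12(A - \varepsilon B)$ is a convex combination that is nontrivial since $\varepsilon B \neq 0$, so $A$ is not a vertex.

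For sufficiency, suppose $A$ is not a vertex, so $A = \lambda_1 B_1 + \cdots + \lambda_k B_k$ with all $0 < \lambda_i < 1$, $\sum_i \lambda_i = 1$, each $B_i \in \Omega_n^d$, and $B_j \neq A$ for some $j$. Set $B := B_j - A$; this is nonzero, and since $B_j$ and $A$ are both polystochastic, every line sum of $B$ is $0$, so $R := \supp(B)$ is a nonempty zero-sum set. The one point that needs care is the inclusion $R \subseteq \supp(A)$: if $\alpha \notin \supp(A)$, then $0 = a_\alpha = \sum_i \lambda_i (B_i)_\alpha$ with all $\lambda_i > 0$ and all $(B_i)_\alpha \geq 0$, which forces $(B_i)_\alpha = 0$ for every $i$; in particular $(B_j)_\alpha = 0$, hence $b_\alpha = (B_j)_\alpha - a_\alpha = 0$ and $\alpha \notin R$. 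Thus $R$ is a nonempty zero-sum set contained in $\supp(A)$. The argument is routine, and this last inclusion — where the strict positivity of the $\lambda_i$ and the nonnegativity of the $B_i$ are essential — is the only step requiring a moment's attention; I anticipate no real obstacle.
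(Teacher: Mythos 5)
Your proof is correct and complete: the perturbation $A = \tfrac12(A+\varepsilon B) + \tfrac12(A-\varepsilon B)$ in one direction, and the difference $B_j - A$ in the other — including the key observation that $\lambda_i > 0$ together with nonnegativity of the $B_i$ forces $\supp(B_j - A) \subseteq \supp(A)$ — is exactly the standard argument behind this criterion. The paper itself states this proposition as a result of Jurkat and Ryser without reproducing a proof, so there is nothing in the text to diverge from; your argument supplies the expected justification with no gaps.
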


Krotov and Potapov in~\cite{KrotPot.Latbitrades} studied some zero-sum sets in multidimensional matrices of order~$3$, but  their results  were not sufficient to characterize this set completely. Moreover,  the conducted computer enumeration of some zero-sum sets in matrices of small dimensions witnesses that the number of zero-sum sets  is comparable with the number of   vertices.

Thus  we need to use another method to check whether a given set of indices $S$ is a support of a polystochastic matrix and, moreover, a support of a vertex of the Birkhoff polytope.  It will be based on the incidence matrix between the lines and  indices from the support.

Let $T$ be a  $d$-dimensional $(0,1)$-matrix of order $n$ and  $N =N(T)$.  Let us enumerate the indices in $\supp(T)$ by numbers  from $1$ to $N$ and  the lines in the matrix $T$ by numbers  from $1$ to $dn^{d-1}$. Then the \textit{incidence matrix} is the rectangular matrix $L = L(T)$ of size $ dn^{d-1} \times N$ such that $\ell_{i,j} = 1$ if the $i$-th line of $T$ contains  the $j$-th element of the support, and $\ell_{i,j} = 0$ otherwise. 

From the definition of polystochastic matrices, we have that, for a given $d$-dimensional $(0,1)$-matrix $T$ of order $n$ with the incidence matrix $L$, there exists a matrix $A \in \Omega_n^d$ with $\supp(A) = \supp(T)$ if and only if equation $Lx = \textbf{1}$ has a positive solution $x  = (x_1, \ldots, x_N)$. Here $\textbf{1}$ stands for the all-one vector of length $dn^{d-1}$,  and entries $x_i$  give nonzero entries of the polystochastic matrix $A$.

Note that if $N \leq n^{d} - (n-1)^{d}$, then the incidence matrix $L$ has more rows than columns (because $n^{d} - (n-1)^{d} < dn^{d-1}$).  It means that the rank of the incidence matrix $L$ is not greater than $N$ and, moreover, if  $rank (L)) = N$, then there is no more than one solution of equation $Lx = \textbf{1}$.  So Proposition~\ref{vertbysupp} can be reformulated in the following way.

\begin{utv} \label{vertexsupp}
Let $T$ be a  $d$-dimensional $(0,1)$-matrix of order $n$ such that $N = N (T) \leq n^{d} - (n-1)^{d}$, and let $L$ be the incidence matrix for $T$. Then the support of $T$ is the support of a vertex of $\Omega_n^d$ if and  only if $rank(L) = N $ and there exists a solution $x = (x_1, \ldots, x_N)$ of $Lx = \textbf{1}$ with all $x_i > 0$. 
\end{utv}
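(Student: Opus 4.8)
The plan is to obtain Proposition~\ref{vertexsupp} by combining Proposition~\ref{vertbysupp} with the linear-algebraic description of supports recalled just above, so that the statement reduces to an elementary fact about when the system $Lx = \mathbf{1}$ has a unique positive solution.

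First I would make the correspondence explicit. By the remark preceding the proposition, the matrices $A \in \Omega_n^d$ with $\supp(A) = \supp(T)$ are in bijection with the solutions $x = (x_1,\dots,x_N)$ of $Lx = \mathbf{1}$ all of whose coordinates are strictly positive; under this bijection $x_j$ is the entry of $A$ at the $j$-th index of $\supp(T)$. Consequently ``$\supp(T)$ is the support of a vertex of $\Omega_n^d$'' translates, via Proposition~\ref{vertbysupp}, into ``$Lx = \mathbf{1}$ has exactly one positive solution''. It then remains to show that this happens if and only if $\mathrm{rank}(L) = N$ and $Lx = \mathbf{1}$ has at least one positive solution.

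For the ``if'' part: if $\mathrm{rank}(L) = N$, then $L$ has full column rank, so $Lx = \mathbf{1}$ has at most one solution at all; if in addition a positive solution exists, it is that unique solution, hence in particular the unique positive one. For the ``only if'' part, let $x$ be the unique positive solution (existence of a positive solution being part of the hypothesis), and suppose for contradiction that $\mathrm{rank}(L) < N$. Then the kernel of $L$ is nontrivial; pick $v \neq 0$ with $Lv = 0$. Since every coordinate of $x$ is strictly positive, for all sufficiently small $|\varepsilon|$ the vector $x + \varepsilon v$ still lies in the open positive orthant and still satisfies $L(x + \varepsilon v) = \mathbf{1}$, giving a positive solution different from $x$, a contradiction. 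Hence $\mathrm{rank}(L) = N$, which completes the equivalence.

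I do not expect any real obstacle. The two steps that deserve a word of care are the explicit correspondence in the first step --- one must check that ``unique vertex support'' really becomes ``unique positive solution'' of $Lx = \mathbf{1}$ and not merely ``unique solution'' --- and the perturbation in the last step, which uses that positivity is an open condition, so one may move a little along the kernel of $L$ without leaving $\Omega_n^d$. The hypothesis $N \leq n^d - (n-1)^d$ is not used in the equivalence itself; by Proposition~\ref{supportbound} it holds automatically whenever $\supp(T)$ is the support of a vertex. Its role is to guarantee that $L$ has at least as many rows as columns (because $n^d - (n-1)^d < dn^{d-1}$), so that the condition $\mathrm{rank}(L) = N$ can actually be met and the criterion becomes a usable computational test.
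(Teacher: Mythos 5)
Your proposal is correct and follows essentially the same route as the paper, which presents this proposition as a reformulation of Proposition~\ref{vertbysupp} via the correspondence between matrices $A\in\Omega_n^d$ with $\supp(A)=\supp(T)$ and positive solutions of $Lx=\mathbf{1}$, together with the observation that $\mathrm{rank}(L)=N$ forces uniqueness of the solution. Your kernel-perturbation argument for the ``only if'' direction (a vertex support forces $\mathrm{rank}(L)=N$) is the standard way to make explicit a step the paper leaves implicit, and it is sound since positivity is an open condition.
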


Summing up, we are ready to describe a general enumeration strategy of  vertices of $\Omega_n^d$. 
\medskip

\textbf{Algorithm 1.}

\begin{enumerate}
\item List  $d$-dimensional $(0,1)$-matrices $T$ of order $n$ with $n^{d-1} \leq N(T) \leq n^d - (n-1)^d$ satisfying conditions (C0), (C1), and, possibly, whose support avoids some zero-sum sets.

\item For every such matrix $T$, find the size $N$ of the support and the incidence matrix $L$.

\item If $rank(L) = N$, solve the equation $Lx = \textbf{1}$.

\item If there is a feasible solution $x = (x_1, \ldots, x_N)$ of  $Lx = \textbf{1}$ such that all $x_i > 0$, then the matrix $A$ obtained from $T$ by assigning the value $x_i$ to the $i$-th element of the support of $T$ is a vertex of $\Omega_n^d$.
\end{enumerate}

In the next sections, we implement this algorithm on polytopes $\Omega_3^4$ and $\Omega_4^3$. In particular,  in these cases we describe  how to enumerate the possible supports of vertices more effectively and provide statistics for the resulting sets of vertices.

\subsection{Vertices of $\Omega_3^4$} \label{vert43sec}

By Proposition~\ref{supportbound},  for the size of the support $N(A)$ of a vertex $A$ of the polytope $\Omega_3^4$, we have that $27 \leq N(A) \leq 65$. Moreover, it is well known  that there is a unique $4$-dimensional permutation of order $3$ (a vertex of $\Omega_3^4$)  with the size of support equal  to $27$.

To narrow the set  of potential supports for vertices of  $\Omega_3^4$, we state several claims  derived from the definition of a polystochastic matrix through straightforward considerations.

\begin{claim} \label{01hyper}
Let $A \in \Omega_3^4$ be different from the multidimensional permutation. If there is a hyperplane $\Gamma$ in $A$ such that $\Gamma$ is  a $(0,1)$-matrix, then $A$ is a convex combination of two multidimensional permutations. In particular,  $A$ is not a vertex of $\Omega_3^4$.
\end{claim}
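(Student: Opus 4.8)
The plan is to exploit the rigid structure of a hyperplane $\Gamma$ in a matrix $A\in\Omega_3^4$ that happens to be a $(0,1)$-matrix. A hyperplane in a $4$-dimensional matrix of order $3$ is a $3$-dimensional matrix of order $3$; the line sums inside $\Gamma$ coming from the three "internal" directions are all equal to $1$ (they are honest lines of $A$), so $\Gamma$ is itself a $3$-dimensional polystochastic matrix of order $3$ that is also a $(0,1)$-matrix, hence a $3$-dimensional permutation of order $3$ — the unique one up to equivalence. In particular $N(\Gamma)=9$. Fixing notation, say $\Gamma$ is the hyperplane $\alpha_1=0$; then the remaining two hyperplanes in that direction, $\alpha_1=1$ and $\alpha_1=2$, have supports whose projections to the last three coordinates are forced: in every line of $A$ in the first direction, the entry in the $\alpha_1=0$ layer is either $0$ or $1$, so either the whole unit of line-sum is already used up (and the entries in layers $1$ and $2$ are $0$), or the $\alpha_1=0$ entry is $0$ and the two entries in layers $1,2$ sum to $1$.

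The key step is then to show the "remaining part" of $A$ — the $3\times 2\times 3\times 3$ slab consisting of layers $\alpha_1=1,2$ — is a convex combination of exactly two pieces, each of which completes $\Gamma$ to a $4$-dimensional permutation. Let $P=\supp(\Gamma)\subseteq I_3^3$; this is a permutation, so its complement $I_3^3\setminus P$ decomposes, and the induced substructure on $\{1,2\}\times (I_3^3\setminus P)$ is what carries the slack. Concretely, I would argue that after deleting from each of the nine internal lines the coordinate occupied by $\Gamma$, the $\alpha_1\in\{1,2\}$ part of $A$ is a $2$-layer nonnegative matrix each of whose lines (in the three internal directions, restricted to the $18$ positions off $P$) sums to... this needs care, but the upshot I expect is that it behaves like a doubly-stochastic-type object on two layers, and by a Birkhoff-type argument on order $2$ (recall $\Omega_2^d$ has dimension $1$ and its vertices are permutations) it is a convex combination of two $0,1$-completions. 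Gluing $\Gamma$ back on, $A=\lambda B_1+(1-\lambda)B_2$ with $B_1,B_2$ multidimensional permutations of order $3$, and since $A\neq B_1$ (as $A$ is not a permutation), this is a nontrivial convex combination, so $A$ is not a vertex.

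A cleaner route to the same conclusion, which I would actually try first, is via Proposition~\ref{JRtradecrit}: produce a nonempty zero-sum set $R\subseteq\supp(A)$. Since $A$ is not a permutation, some internal line has two nonzero entries, necessarily in layers $\alpha_1=1$ and $\alpha_1=2$ (the $\alpha_1=0$ entry being $0$ there). Tracking how such a "doubled" line propagates — each nonzero off-$P$ entry in layer $1$ forces, by the polystochastic constraints in the other internal directions and the fact that layer $0$ contributes only $0$'s there, a matching nonzero in layer $2$ and vice versa — one builds a $\pm1$ zero-sum matrix supported on these doubled positions (formally: take $B$ to be $+1$ on the layer-$1$ doubled positions and $-1$ on the corresponding layer-$2$ positions; its line sums vanish in all four directions precisely because layer $0$ is fixed and the off-$P$ structure in layers $1,2$ mirrors each other). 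Then $R=\supp(B)\subseteq\supp(A)$ is a nonempty zero-sum set, so $A$ is not a vertex; a single controlled perturbation $A\pm\varepsilon B$ then exhibits $A$ as the midpoint of two polystochastic matrices, and pushing $\varepsilon$ to the boundary gives the two permutations.

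The main obstacle is verifying that the off-$P$ positions in layers $\alpha_1=1$ and $\alpha_1=2$ really do "mirror" each other cleanly enough that the $\pm1$ assignment has all four families of line sums equal to zero — i.e., that the combinatorial structure forced by $\Gamma$ being the unique order-$3$ permutation is rigid in all three internal directions simultaneously, not just one. I expect this to come down to a short case analysis using the explicit form of the order-$3$ $4$-dimensional... rather, order-$3$ $3$-dimensional permutation and condition (C1)-type reasoning about lines with a single support element; once that rigidity is pinned down, the zero-sum set (equivalently, the two-term convex decomposition into permutations) drops out immediately.
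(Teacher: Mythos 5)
Your setup is correct and matches the natural argument: since the lines of $\Gamma$ in the three internal directions are lines of $A$, $\Gamma$ is a $3$-dimensional polystochastic $(0,1)$-matrix of order $3$, hence the (unique up to equivalence) $3$-dimensional permutation; and on each direction-$1$ line the layer-$0$ entry is $0$ or $1$, so the layer-$1$ and layer-$2$ entries either both vanish or sum to $1$. The two-term convex decomposition is the right target. However, the zero-sum matrix you write down is not a zero-sum matrix. Assigning $+1$ to every position of layer $\alpha_1=1$ off $P:=\supp(\Gamma)$ and $-1$ to the corresponding positions in layer $\alpha_1=2$ does give direction-$1$ sums $0+1-1=0$, but a line of an internal direction lying inside the layer $\alpha_1=1$ meets $P$ exactly once and its complement twice, so its sum under your assignment is $+2$, not $0$. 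The fix needs an observation you never quite make: $I_3^3\setminus P$ splits into two further disjoint $3$-dimensional permutations $P'$ and $P''$ (for $P=\{i+j+k\equiv 0\}$ take $P'=\{i+j+k\equiv 1\}$, $P''=\{i+j+k\equiv 2\}$), and every internal line meets each of $P$, $P'$, $P''$ exactly once. The correct zero-sum matrix is $+1$ on $\{1\}\times P'$ and $\{2\}\times P''$ and $-1$ on $\{1\}\times P''$ and $\{2\}\times P'$; the signs alternate within each layer, not merely between the two layers.

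More importantly, the crux of the claim is exactly what you flag as ``the main obstacle'' and leave undone: that the layer-$1$ values are constant on $P'$ (say equal to some $\lambda$), hence equal to $1-\lambda$ on $P''$, so that $A=\lambda B_1+(1-\lambda)B_2$ where $B_1$, $B_2$ are the two $4$-dimensional permutations whose layers $0,1,2$ are supported on $(P,P',P'')$ and $(P,P'',P')$ respectively. This constancy follows from the fact that the bipartite graph on $P'\sqcup P''$ joining two indices at Hamming distance $1$ is connected (a quick check on nine-plus-nine vertices), since for collinear $\beta'\in P'$ and $\beta''\in P''$ the relation $a_{1,\beta'}+a_{1,\beta''}=1$ (the third index on that line lies in $P$ and carries $0$ in layer $1$) propagates a single value across all of $P'$. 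Once constancy is in hand, $A$ not being a permutation forces $0<\lambda<1$, which also shows every entry of $\{1,2\}\times(P'\cup P'')$ is strictly positive and hence that the zero-sum set really is contained in $\supp(A)$ --- a point your zero-sum route leaves open as well, since a priori some $a_{1,\beta}$ could be $0$. Without this connectivity-and-constancy step, neither of your two proposed routes actually closes.
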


\begin{claim} \label{21indist3}
Let $A \in \Omega_3^4$ be different from the multidimensional permutation. If there are indices $\alpha, \beta \in I_3^4$ such that the Hamming distance between $\alpha$ and $\beta$ is equal to $3$, then the hyperplane $\Gamma$ containing both $\alpha$ and $\beta$ is a $(0,1)$-matrix. In particular, by Claim~$\ref{01hyper}$, $A$ is not a vertex of $\Omega_3^4$. 
\end{claim}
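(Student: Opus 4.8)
The plan is to work inside the two-dimensional plane determined by the two positions in which $\alpha$ and $\beta$ agree, and to use the line-sum conditions together with nonnegativity to force all entries of $\Gamma$ to be $0$ or $1$. Write $\alpha = (\alpha_1,\alpha_2,\alpha_3,\alpha_4)$ and $\beta = (\beta_1,\beta_2,\beta_3,\beta_4)$, and suppose without loss of generality that $\alpha$ and $\beta$ agree in position $4$ (so $\alpha_4 = \beta_4$) and differ in positions $1,2,3$. The hyperplane $\Gamma$ obtained by fixing the fourth coordinate to $\alpha_4$ is itself a $3$-dimensional matrix of order $3$; since $A$ is polystochastic and $n=3$, each line of $\Gamma$ has sum equal to $1$ only if the corresponding line of $A$ lies entirely in $\Gamma$, which is not the case — instead, for each line $\ell$ of $\Gamma$, the sum of the entries of $A$ over $\ell$ is $1$, and the entries of $A$ in the two parallel lines obtained by moving the fourth coordinate are nonnegative, so the sum over $\ell$ of the entries of $\Gamma$ is \emph{at most} $1$. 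Thus $\Gamma$ is a nonnegative matrix whose line sums are all $\le 1$, and it contains the two indices $\alpha,\beta$ at Hamming distance $3$ inside $\Gamma$ (i.e.\ $\alpha$ and $\beta$, restricted to their first three coordinates, differ in all three positions).

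The key step is then a purely combinatorial argument about $3$-dimensional nonnegative matrices of order $3$ with all line sums at most $1$: if such a matrix has two strictly positive entries at indices differing in all three coordinates, then all of its entries are in $\{0,1\}$, in fact both those entries equal $1$ and every other entry is $0$. To see this, label the positive values $a_{\alpha'} = p > 0$ and $a_{\beta'} = q > 0$, where $\alpha',\beta'$ are the restrictions to the first three coordinates; since they differ in every coordinate, each of the three lines through $\alpha'$ meets one of the three lines through $\beta'$ in a single further index, and I would chase these six lines. Consider the three lines through $\alpha'$ in the three coordinate directions: each has sum $\le 1$, so the total of all entries other than $a_{\alpha'}$ lying on these lines is $\le 3(1-p)$; a parallel count for $\beta'$ gives $\le 3(1-q)$. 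Next I would use the line-sum constraints along lines through $\alpha'$ that also pass near $\beta'$: the line through $\alpha'$ in direction $i$ contains the index that agrees with $\alpha'$ outside coordinate $i$ and with $\beta'$ in coordinate $i$; summing the three such constraints and comparing with the three lines through $\beta'$ in the complementary pattern yields $p + q \le 1$ from one grouping and $p + q \ge 1$ from the nonnegativity of all remaining entries together with the requirement that these remaining entries simultaneously fill out the other line sums — forcing $p = q$, and then an averaging argument over all $3^3$ lines forces every positive entry to equal $p$ with exactly the permutation-like support, whence $p = 1$. This is the standard fact underlying Claim~\ref{01hyper} in dimension $3$, and I would phrase it as a short self-contained lemma.

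Once $\Gamma$ is shown to be a $(0,1)$-matrix, the conclusion follows immediately: by Claim~\ref{01hyper}, any $A \in \Omega_3^4$ different from the multidimensional permutation that has a $(0,1)$-valued hyperplane is a convex combination of two multidimensional permutations, hence not a vertex. The main obstacle I anticipate is the bookkeeping in the combinatorial lemma — tracking which of the $27$ indices of a $3\times 3\times 3$ matrix lie on which lines through $\alpha'$ and $\beta'$, and organizing the line-sum inequalities so that they pinch $p$ and $q$ to $1$ without an exhaustive case split. A cleaner route, which I would also consider, is to observe that a nonnegative $3$-dimensional matrix of order $3$ with all line sums $\le 1$ embeds (by adding a "slack" layer in a fourth direction) into a genuine $3$-dimensional polystochastic matrix of order $3$, and then invoke the known classification of vertices of $\Omega_3^3$ together with the support conditions (C0), (C1) to pin down the structure directly; this trades the direct inequality-chase for an appeal to already-cited results.
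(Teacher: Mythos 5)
There is a genuine gap, and it starts with the hypotheses. Read in context (see Claim~\ref{2ones2dim} and the paragraph following Claim~\ref{22diminpoint}), the claim is about two entries \emph{equal to $1$}: $a_\alpha=a_\beta=1$ with $\rho(\alpha,\beta)=3$. Your proposal never uses this; your ``key lemma'' asserts that a nonnegative $3$-dimensional matrix of order $3$ with all line sums at most $1$ having two \emph{strictly positive} entries at indices differing in all three coordinates must be a $(0,1)$-matrix with exactly those two entries equal to $1$. That lemma is false: the matrix with every entry $1/3$ is polystochastic (so all line sums are $\leq 1$) and has positive entries at every pair of indices, and for the ``$\leq 1$'' version even the matrix with value $1/2$ at $(0,0,0)$ and $(1,1,1)$ and zeros elsewhere is a counterexample. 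Consequently the proposed pinching $p+q\leq 1$ and $p+q\geq 1$ cannot be carried out, and no amount of bookkeeping will rescue it. A second, structural error: you claim the line sums of $\Gamma$ are only $\leq 1$ because ``the corresponding line of $A$ does not lie in $\Gamma$.'' In fact every line of $\Gamma$ \emph{is} a line of $A$ (fixing the fourth coordinate to define $\Gamma$ and then two of the first three coordinates fixes three of the four positions), so $\Gamma$ is itself a $3$-dimensional polystochastic matrix of order $3$ with line sums exactly $1$; the only lines of $A$ not inside $\Gamma$ are those in the fourth direction, which meet $\Gamma$ in a single index. (Also, two indices at Hamming distance $3$ in $I_3^4$ agree in exactly one position, not two, so the relevant object is a hyperplane, not a $2$-dimensional plane.)

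The intended ``straightforward'' argument runs as follows. Since $\rho(\alpha,\beta)=3$, there is a unique hyperplane $\Gamma$ containing both, obtained by fixing the coordinate where they agree, and $\Gamma$ is polystochastic. The entry $1$ at $\alpha$ forces all other entries on the three lines of $\Gamma$ through $\alpha$ to vanish, and likewise for $\beta$; then the remaining line-sum-equals-$1$ conditions in the order-$3$ matrix $\Gamma$ propagate: for instance, the line through the two indices outside these six lines in each direction must contain a new $1$, and iterating this forcing fills out all nine $1$s of a $3$-dimensional permutation of order $3$, so $\Gamma$ is a $(0,1)$-matrix and Claim~\ref{01hyper} applies. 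Your alternative route (embedding a sub-stochastic matrix into $\Omega_3^3$ via a slack layer and invoking the classification of vertices of $\Omega_3^3$) is also off target: $\Gamma$ is already an element of $\Omega_3^3$ without any embedding, but it need not be a vertex, so the classification gives no leverage; what does the work is the pair of $1$-entries at distance $3$, which is exactly the hypothesis your proposal discards.
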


\begin{claim} \label{tw02dim01}
Let $A \in \Omega_3^4$ be different from the multidimensional permutation. If there is a hyperplane $\Gamma$ of $A$ containing two $2$-dimensional planes $P_1$ and $P_2$ of different directions such that $P_1$ and $P_2$ are $(0,1)$-matrices, then, by Claim~\ref{21indist3}, $\Gamma$ is a $(0,1)$-matrix. In particular, by Claim~$\ref{01hyper}$,  $A$ is not a vertex of $\Omega_3^4$. 
\end{claim}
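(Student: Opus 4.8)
The plan is to derive Claim~\ref{tw02dim01} as a short chain of reductions to the two preceding claims, rather than attacking it from scratch. The statement already tells us the conclusion: if a hyperplane $\Gamma$ (a $3$-dimensional polystochastic matrix of order $3$, since $A \in \Omega_3^4$) contains two $2$-dimensional $(0,1)$ planes $P_1$ and $P_2$ of different directions, then $\Gamma$ is a $(0,1)$-matrix, and then Claim~\ref{01hyper} finishes the job. So the real content is: two $(0,1)$ planes of different directions inside a polystochastic hyperplane force the whole hyperplane to be $(0,1)$.

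First I would set up coordinates inside $\Gamma$. Write $\Gamma$ as a $3$-dimensional matrix with positions indexed by $(i,j,k)$, $i,j,k \in \{0,1,2\}$. Without loss of generality say $P_1$ is obtained by fixing the first coordinate, $i = i_0$, and $P_2$ by fixing the second coordinate, $j = j_0$ (different directions; the third case $P_1 \parallel$ one axis and $P_2 \parallel$ a non-overlapping axis is handled identically). Both $P_1$ and $P_2$ are $2$-dimensional $(0,1)$ doubly stochastic submatrices of $\Gamma$ — hence permutation matrices of order $3$ — because each line of $P_1$ is a line of $\Gamma$ (sum $1$) and the entries are in $\{0,1\}$. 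The next step is to look at the lines of $\Gamma$ in the third direction, i.e. lines of the form $\{(i_0, j, k) : k\}$ for fixed $j$: such a line passes through $P_1$ in exactly one entry, and since that entry is $0$ or $1$ and the line sums to $1$, either the entry is $1$ (and the rest of the line is $0$), or the entry is $0$ (and the other two entries of the line, which lie outside $P_1$, sum to $1$). The aim is to show the second case cannot happen, or rather to propagate $(0,1)$-ness outward.

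The cleanest route is probably to invoke Claim~\ref{21indist3} directly, as the statement of Claim~\ref{tw02dim01} itself suggests. Here is the mechanism: take any entry $(i_1, j_1, k_1)$ of $\Gamma$ with $i_1 \ne i_0$, $j_1 = j_0$; it lies in the $(0,1)$-plane $P_2$, so it equals $0$ or $1$. Now consider an entry $(i_0, j_1', k_1')$ in $P_1$ with value $1$; there is a position in $\Gamma$ at Hamming distance — here is where one must be careful about whether we are measuring distance in $I_3^4$ or within $\Gamma$ — where by combining a coordinate from $P_1$'s "diagonal'' and a coordinate from $P_2$'s "diagonal'' we locate a pair of indices of the full matrix $A$ at Hamming distance $3$, so that Claim~\ref{21indist3} forces the hyperplane through them to be $(0,1)$; iterating, every hyperplane direction inside $\Gamma$ becomes $(0,1)$ and hence $\Gamma$ is $(0,1)$. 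Concretely: fix the value $a \in \{0,1\}$ of some generic entry $x = (i_1,j_2,k_2)$ of $\Gamma$ with $i_1 \notin\{i_0\}$, $j_2 \ne j_0$, $k_2$ arbitrary; choose the unique $k$ with $(i_0,j_2,k) \in \supp(P_1)$ and the unique $i$ with $(i,j_0,k_2)\in\supp(P_2)$ — if $x$ agrees with one of these "support points'' in a coordinate, they are close; if it disagrees in all three movable coordinates then the two corresponding indices of $A$ (which also share the fixed hyperplane coordinate of $\Gamma$) are at Hamming distance $3$ and Claim~\ref{21indist3} applies. Tracking the handful of coordinate cases (at most $3^3 = 27$ entries, and most are immediate because they lie in $P_1$ or $P_2$) shows every entry of $\Gamma$ is forced to $\{0,1\}$.

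I expect the main obstacle to be purely bookkeeping: verifying that every entry of $\Gamma$ outside $P_1 \cup P_2$ can be paired with a distance-$3$ partner (inside $\Gamma$, hence distance $3$ in $I_3^4$ since the remaining coordinate is common) so that Claim~\ref{21indist3} bites — this requires checking that the permutation structures of $P_1$ and $P_2$ never conspire to keep some entry within Hamming distance $\le 2$ of all the $(0,1)$ anchor points simultaneously. A slick alternative that sidesteps case analysis: show that the intersection line $P_1 \cap P_2$ (the line $\{(i_0,j_0,k):k\}$) is a $(0,1)$ line, conclude using Claim~\ref{21indist3} along the third direction that every plane $\{j = j_0\}$... no, that is already $P_2$. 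Better: having $P_1$ and $P_2$ both $(0,1)$, any entry $(i_1,j_1,k_1)$ with $i_1\ne i_0,\ j_1\ne j_0$ lies in a line of the third direction meeting $P_1$ at $(i_0,j_1,k_1)$ and in a line of... — one then argues by a counting/parity contradiction that a non-$(0,1)$ value here forces the $2$-plane $\{k=k_1\}$ of $\Gamma$ to contain a fractional line incompatible with both permutation constraints, and Claim~\ref{21indist3} plus Claim~\ref{01hyper} close it. Either way the proof is a few lines of coordinate chasing reducing to Claims~\ref{21indist3} and~\ref{01hyper}; no new idea beyond those two claims is needed.
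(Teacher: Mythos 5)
You have named the right chain --- the same one the paper intends: exhibit two $1$-entries of $A$, one in $P_1$ and one in $P_2$, at Hamming distance $3$; since such a pair agrees only in the coordinate fixed by $\Gamma$, the unique hyperplane containing both is $\Gamma$ itself, so Claim~\ref{21indist3} (whose ``indices'' must be read as indices of $1$-entries, as the paper's discussion after Claim~\ref{22diminpoint} confirms) makes $\Gamma$ a $(0,1)$-matrix, and Claim~\ref{01hyper} finishes. But your concrete execution never actually produces such a pair, and in places misapplies Claim~\ref{21indist3}. Your recipe fixes a generic entry $x=(i_1,j_2,k_2)$ of $\Gamma$ and pairs it with the $1$-entry of $P_1$ in the same $j$-column, $(i_0,j_2,k)$, and the $1$-entry of $P_2$ with the same third coordinate, $(i,j_0,k_2)$. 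If the pair to which you apply Claim~\ref{21indist3} is ``$x$ together with an anchor'', that is not licensed: the claim constrains pairs of $1$-entries, and $x$ is precisely the entry whose value is unknown (you even write ``fix the value $a\in\{0,1\}$ of some generic entry $x$'', presupposing the conclusion). If instead the pair is the two anchors, your distance test is applied to the wrong objects: $x$ automatically agrees with the first anchor in the second coordinate and with the second anchor in the third, so your ``disagrees in all three movable coordinates'' branch never occurs; and the two anchors, chosen as functions of $x$, need not be at distance $3$ (they can share the third coordinate, or the $P_2$-anchor can lie on the intersection line $i=i_0$, $j=j_0$). So the announced $27$-entry bookkeeping does not close, and the ``iteration over hyperplane directions inside $\Gamma$'' misreads what Claim~\ref{21indist3} outputs: a single hyperplane of $A$, which here is $\Gamma$.

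The missing step is short. Since $P_1$ and $P_2$ are $(0,1)$-planes whose lines are lines of $A$, they are permutation matrices of order $3$; they meet in the line $\{(i_0,j_0,k): k\in\{0,1,2\}\}$ and share its unique $1$, say at $k=k_0$. The two remaining $1$s of $P_1$ and the two remaining $1$s of $P_2$ both have third coordinates ranging over $\{0,1,2\}\setminus\{k_0\}$, so one can choose a $1$-entry $(i_0,j_a,k_a)$ of $P_1$ and a $1$-entry $(i_b,j_0,k_b)$ of $P_2$ with $j_a\neq j_0$, $i_b\neq i_0$, $k_a\neq k_b$. These two $1$-entries differ in all three coordinates free in $\Gamma$ and agree only in the coordinate fixed by $\Gamma$, hence are at Hamming distance $3$ and lie in no common hyperplane other than $\Gamma$; Claim~\ref{21indist3} then yields that $\Gamma$ is a $(0,1)$-matrix in one stroke, and Claim~\ref{01hyper} gives that $A$ is not a vertex. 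Until this existence check is actually carried out, your proposal has a genuine gap at its pivotal step, even though the overall route coincides with the paper's.
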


\begin{claim} \label{2ones2dim}
Let $A \in \Omega_3^4$ be different from the multidimensional permutation. If some $2$-dimensional plane $P$ of $A$ contains two $1$s, then $P$ is a $(0,1)$-matrix.
\end{claim}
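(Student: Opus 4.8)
Let $A \in \Omega_3^4$ be different from the multidimensional permutation. If some $2$-dimensional plane $P$ of $A$ contains two $1$s, then $P$ is a $(0,1)$-matrix.

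The plan is to reduce the claim to an elementary fact about $3\times 3$ doubly stochastic matrices. The key observation is that a $2$-dimensional plane of a polystochastic matrix is itself polystochastic, i.e., doubly stochastic; once this is in place, the statement becomes a short finite case analysis (or an immediate corollary of the Birkhoff theorem applied to $P$).

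First I would verify that $P \in \Omega_3^2$. The plane $P$ is obtained from $A$ by fixing some two of the four positions, say positions $i$ and $j$, and letting the remaining two vary. A row (resp.\ column) of the $3\times 3$ array $P$ is then obtained by additionally fixing one of the two free positions and letting the last one range over $\{0,1,2\}$. In terms of $A$ this means fixing three positions and varying one, i.e., a row or column of $P$ is a line of $A$, and hence sums to $1$ by polystochasticity. Keeping careful track of which positions are fixed is the only point in the argument that requires attention; after that, $P$ is a nonnegative $3\times 3$ matrix all of whose row and column sums equal $1$.

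Now suppose $P$ has two entries equal to $1$. If a doubly stochastic matrix has a $1$ in position $(a,b)$, then all other entries in row $a$ and in column $b$ vanish; in particular, two entries equal to $1$ cannot lie in a common row or a common column. So after permuting rows and columns we may assume the two $1$s occupy positions $(0,0)$ and $(1,1)$. Then rows $0$ and $1$ and columns $0$ and $1$ of $P$ are forced to be standard basis vectors, the column-sum conditions on columns $0$ and $1$ force the first two entries of row $2$ to be $0$, and the row-sum condition on row $2$ forces its last entry to equal $1$. Hence $P$ is a permutation matrix, so in particular a $(0,1)$-matrix.

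I do not expect a genuine obstacle: everything reduces to the elementary structure of $\Omega_3^2$, and the only care needed is the bookkeeping identifying lines of $P$ with lines of $A$. I would also note that the hypothesis that $A$ differs from the multidimensional permutation is not actually used here; it is kept only for uniformity with the neighboring claims.
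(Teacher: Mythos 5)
Your proof is correct and is exactly the kind of elementary argument the paper intends: the paper gives no explicit proof, describing Claims~\ref{01hyper}--\ref{22diminpoint} as ``derived from the definition of a polystochastic matrix through straightforward considerations,'' and your reduction to the fact that a $2$-dimensional plane of a polystochastic matrix is a $3\times 3$ doubly stochastic matrix, followed by the observation that two $1$s force such a matrix to be a permutation matrix, is that straightforward consideration. Your side remark that the hypothesis ``different from the multidimensional permutation'' is not actually used here is also accurate; it is carried along only for consistency with the surrounding claims.
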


\begin{claim} \label{22diminpoint}
Let $A \in \Omega_3^4$ be different from the multidimensional permutation. Assume that there are  $2$-dimensional   $(0,1)$-planes  $P_1$ and $P_2$ in $A$ that intersect at exactly one entry $a_{\alpha}$.   
\begin{itemize}
\item If $a_{\alpha} = 0$, then there is a $2$-dimensional $(0,1)$-plane $P_3$  intersecting $P_1$ or $P_2$ along a line. In this case, by Claim~$\ref{tw02dim01}$, $A$ is not a vertex.
\item If $a_{\alpha} = 1$, then such a matrix $A$ is unique, and $A$ is equivalent to vertex \#$2$ (see Appendix).
\end{itemize}
\end{claim}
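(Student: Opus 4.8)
The plan is to analyze the local configuration around the intersection point $a_\alpha$ of the two $2$-dimensional $(0,1)$-planes $P_1$ and $P_2$, using the polystochasticity constraints together with Claim~\ref{2ones2dim} (a $2$-dimensional plane with two $1$s is a $(0,1)$-plane). Say $P_1$ varies positions $\{1,2\}$ and $P_2$ varies positions $\{3,4\}$, so $P_1$ and $P_2$ share exactly the index $\alpha$. Since each of $P_1, P_2$ is a $2$-dimensional $(0,1)$-polystochastic matrix of order $3$, it is a $3\times 3$ permutation matrix; hence each contains exactly three $1$s and six $0$s, and $\alpha$ is one of their entries. Throughout, $A$ is assumed different from the multidimensional permutation.

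First I would handle the case $a_\alpha = 0$. Then $\alpha$ is a $0$-entry of the permutation matrix $P_1$, so in $P_1$ there is a $1$ in the row of $\alpha$ (fixing position $1$) at some other value of position $2$, and a $1$ in the column of $\alpha$ (fixing position $2$) at some other value of position $1$; similarly for $P_2$. The idea is to chase these forced $1$s into neighbouring $2$-dimensional planes and invoke Claim~\ref{2ones2dim} to promote them to $(0,1)$-planes: a $1$ lying in the intersection line of $P_1$ with, say, the plane varying positions $\{1,3\}$ through that $1$, together with a second $1$ produced from the $P_2$ side, forces that $\{1,3\}$-plane to be a $(0,1)$-plane $P_3$. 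One then checks that $P_3$ meets $P_1$ or $P_2$ along a full line (not just a point), so Claim~\ref{tw02dim01} applies and $A$ is not a vertex. The bookkeeping here — keeping track of which positions are fixed by which plane and verifying that the new $(0,1)$-plane really intersects $P_1$ or $P_2$ in a line of the appropriate pair of directions — is the part that needs care; this is where I expect the main obstacle to lie, since one must make sure the argument does not secretly require $A$ itself to already be a permutation.

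For the case $a_\alpha = 1$: now $\alpha$ is a common $1$-entry, and $P_1, P_2$ are permutation matrices through it. After applying the equivalence (permuting hyperplanes and directions) we may normalize $\alpha = (0,0,0,0)$ and fix $P_1$ and $P_2$ to be, say, the cyclic permutation matrices in their respective pairs of directions. The plan is then to propagate constraints: every line meeting a $1$-entry of $P_1$ or $P_2$ has its sum already saturated, forcing many further entries to $0$; the remaining entries are pinned down by the line-sum equations, and one shows the linear system has a unique nonnegative solution, which must then be vertex~\#2 up to equivalence. Concretely I would write out the $3^4 = 81$ entries in a few $3\times 3$ blocks, mark the forced $0$s and $1$s coming from $P_1, P_2$, and solve the small remaining system by hand (it should decouple into $2\times 2$ pieces giving entries $\nicefrac12$), then verify by inspection that the result matches vertex~\#2. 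Uniqueness up to equivalence is essentially automatic once we see that all choices made in the normalization were forced up to relabeling, so the residual work is the explicit — but short — computation and the comparison with the Appendix matrix.
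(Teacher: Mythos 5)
Your treatment of the first bullet ($a_{\alpha}=0$) is fine: taking the $1$ of $P_1$ on the line of $P_1$ through $\alpha$ that keeps $\alpha_2$ fixed, and the $1$ of $P_2$ on the line of $P_2$ through $\alpha$ that keeps $\alpha_4$ fixed, both lie in the plane varying positions $\{1,3\}$ with $\alpha_2,\alpha_4$ as in $\alpha$; Claim~\ref{2ones2dim} makes it a $(0,1)$-plane, and it meets $P_1$ along the line varying $\alpha_1$, so Claim~\ref{tw02dim01} applies. The bookkeeping you worry about does go through, and since the paper gives no written proof of these claims (they are stated as straightforward), this part is acceptable.

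The second bullet, however, has a genuine gap: your plan asserts that after normalizing $P_1,P_2$ (note: they must be normalized to the identity pattern, not to cyclic permutation matrices, since both contain the $1$ at $\alpha$) the line-sum equations pin down the remaining entries uniquely, ``decoupling into $2\times 2$ pieces giving entries $\nicefrac{1}{2}$.'' That is false. With $\alpha=(0,0,0,0)$, $a_{tt00}=a_{00tt}=1$, the matrix $A'$ defined by $a'_{\beta}=1$ if $\beta_1=\beta_2$ and $\beta_3=\beta_4$, $a'_{\beta}=\nicefrac{1}{2}$ if $\beta_1\neq\beta_2$ and $\beta_3\neq\beta_4$, and $a'_{\beta}=0$ otherwise, is polystochastic, contains the same two $(0,1)$-planes meeting only at $\alpha$ with $a'_{\alpha}=1$, and is different from both the multidimensional permutation and vertex \#2. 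In fact the set of polystochastic completions is a two-parameter family (the $2\times 2$ blocks are one-parameter doubly stochastic blocks, not forced to $\nicefrac{1}{2}$); it is a face of $\Omega_3^4$ which turns out to be a triangle whose extreme points are two multidimensional permutations and $A_2$, and $A'$ above is the midpoint of the two permutation corners. So uniqueness cannot come from the linear system alone: you must invoke the vertex (extremality) hypothesis, e.g.\ by exhibiting the two-parameter family explicitly and arguing that any vertex of $\Omega_3^4$ lying in this face is an extreme point of it, hence is a permutation or $A_2$. Without that step your argument would stall exactly where you expect it to close, because the ``unique nonnegative solution'' you rely on does not exist.
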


Claims~\ref{01hyper}--\ref{22diminpoint} imply that if $A$ is a vertex of  $\Omega_3^4$  such that $A$ has an entry equal to $1$  and $A$ is different from the multidimensional permutation (vertex \#1)  or vertex \#2 , then all indices of  $1$-entries of $A$   are either located in some $2$-dimensional plane or located at the (maximum possible) distance  $4$ from each other. 

To find other possible supports $T$  for  vertices  of $\Omega_3^4$, we present every $4$-dimensional $(0,1)$-matrix $T$  of order $3$   as a union of $9$ parallel $2$-dimensional planes  $P_{1}, \ldots, P_{9}$:
$$
T = \left(
\begin{array}{c|c|c}
P_{1} & P_{2} & P_{3} \\
\hline
P_{4} & P_{5} & P_{6} \\
\hline
P_{7} & P_{8} & P_{9} 
\end{array}
\right).$$

Without loss of generality, we may assume that $N(P_{1}) \leq N(P_{i})$ for all $i \in \{ 1,\ldots, 9\}$.  Since   $N(T) \leq 65$, we have $N(P_{1}) \leq 7$. Thus,  the following possibilities exist for $P_{1}$  as the  support of a doubly stochastic matrix:
$$
\mathcal{P}_1 =
\begin{array}{ccc}
 1 & 0 & 0 \\ 0 & 1 & 0 \\ 0 & 0 & 1 
\end{array}; ~~
\mathcal{P}_2 =
\begin{array}{ccc}
1 & 0 & 0 \\ 0 & 1 & 1 \\ 0 & 1 & 1 
\end{array}; ~~
\mathcal{P}_3 =
\begin{array}{ccc}
1 & 1 & 0 \\ 0 & 1 & 1 \\ 1 & 0 & 1 
\end{array}; ~~
\mathcal{P}_4 =
\begin{array}{ccc}
0 & 1 & 1 \\ 1 & 0 & 1 \\ 1 & 1 & 1 
\end{array}.
$$

By Claims~\ref{01hyper}--\ref{22diminpoint}, no other plane $P_{i}$ is equivalent to $\mathcal{P}_1$. Thus, for $i \in \{ 2, \ldots, 9\}$, each plane $P_i$ is equivalent to one of the following  planes with the support of a  doubly stochastic matrix:
$$
\mathcal{P}_2 =
\begin{array}{ccc}
1 & 0 & 0 \\ 0 & 1 & 1 \\ 0 & 1 & 1 
\end{array}; ~~
\mathcal{P}_3 =
\begin{array}{ccc}
1 & 1 & 0 \\ 0 & 1 & 1 \\ 1 & 0 & 1 
\end{array}; ~~
\mathcal{P}_4 =
\begin{array}{ccc}
0 & 1 & 1 \\ 1 & 0 & 1 \\ 1 & 1 & 1 
\end{array};~~ 
\mathcal{P}_5 =
\begin{array}{ccc}
0 & 1 & 1 \\ 1 & 1 & 1 \\ 1 & 1 & 1 
\end{array}; ~~
\mathcal{P}_6 =
\begin{array}{ccc}
1 & 1 & 1 \\ 1 & 1 & 1 \\ 1 & 1 & 1 
\end{array}.
$$
There are $9$ matrices equivalent to $\mathcal{P}_2$, $6$ matrices equivalent to $\mathcal{P}_3$,  $18$ matrices equivalent to $\mathcal{P}_4$, and $9$ matrices equivalent to $\mathcal{P}_5$, making a total  of no more than $43$ possibilities for each plane  $\mathcal{P}_i$,$i \in \{ 2, \ldots, 9\}$. Moreover,   Claims~\ref{01hyper}--\ref{22diminpoint}  imply that if $P_1  = \mathcal{P}_1$, then none of $P_i$, $i \in \{ 2, \ldots, 9\}$, is equivalent to $\mathcal{P}_2$, and if $P_1  = \mathcal{P}_2$, then none of  planes $P_2$, $P_3$, $P_4$, and $P_7$ is equivalent to $\mathcal{P}_2$. 

Taking these restrictions into account, we  enumerate  all  collections of planes  $P_1, \ldots, P_9$ of the matrix $T$ such that $N(P_1) + \cdots + N(P_9) \leq 65$ and the  matrix $T$ satisfies conditions (C0) and (C1). To shorten computations,  conditions (C0) and (C1)   can   be   during the construction of $T$ , as each new plane $P_i$ completes a $3$-dimensional plane of $T$.  

To find all vertices of the polytope  $\Omega_3^4$,  we perform steps 2--4 of Algorithm 1  for  each matrix $T$.  We establish that there are  $21$ equivalence classes of vertices $\Omega_3^4$. In Appendix, one can find  a representative for each  equivalence class, along with the value of their permanent and the size of their support. We also note that there are $6$ symmetric vertices of $\Omega_3^4$: \#1, \#4, \#7, \#9, \#13, \#19.

The distribution of sizes of the supports of vertices $A \in \Omega_3^4 $ is given in Table~1. In Table~2 we present the  distribution of  the least common multiple of  the denominators of the entries of the vertices, i.e., the minimal integer positive values $\Delta$ such  that $\Delta \cdot  A$ is an integer matrix.
\begin{center}
\begin{tabular}{c||c|c|c|c|c|c|c|c|c|c|c|c}
$N(A)$ & 27 & 49 & 51 & 52 & 58 & 59 & 60 & 61 &  62 & 63 & 64 & 65 \\
\hline
\# of $A$ & 1 & 1 & 1 & 1 & 1 & 2 & 2 & 2 &  3 & 5 &  1 & 1 
\end{tabular}

Table 1. Sizes of supports of vertices of $\Omega_3^4$. 
\end{center}

\begin{center}
\begin{tabular}{c||c|c|c|c|c|c}
$\Delta$ & 1 & 2 & 3 & 4 & 5 & 6  \\
\hline
\# of $A$ & 1 & 3 & 6 & 6 & 4 & 1 
\end{tabular}

Table 2. LCM of denominators of entries of vertices $A$ of $\Omega_3^4$. 
\end{center}

\subsection{Vertices of $\Omega_4^3$} \label{vert34sec}

By Proposition~\ref{supportbound},  for the size of the support $N(A)$ of a vertex $A$ of the polytope $\Omega_4^3$, we have that $16 \leq N(A) \leq 37$. 

To find possible supports $T$  for  vertices  of $\Omega_4^3$, we present every $3$-dimensional $(0,1)$-matrix $T$  of order $4$   as a union of $4$ parallel $2$-dimensional planes  $P_{1}, \ldots, P_{4}$:

$$
T = \left(
\begin{array}{c|c|c|c}
P_{1} & P_{2} & P_{3} & P_4 \\
\end{array}
\right).$$

Without loss of generality, we may assume that $N(P_{1}) \leq N(P_{2}) \leq  N(P_{3})  \leq  N(P_{4})$.   Since   $N(T) \leq 37$, we have $N(P_{1}) \leq 9$. So, by   Proposition~\ref{totalsup}, for the plane  $P_{1}$ we have the following possibilities:
\begin{gather*}
\begin{array}{cccc}
1 & 0 & 0 & 0  \\ 0 & 1 & 0 & 0 \\ 0 & 0 & 1  & 0 \\ 0 & 0 & 0  & 1
\end{array}; ~~
\begin{array}{cccc}
1 & 0 & 0 & 0  \\ 0 & 1 & 0 & 0 \\ 0 & 0 & 1  & 1 \\ 0 & 0 & 1  & 1
\end{array}; ~~
\begin{array}{cccc}
1 & 0 & 0 & 0  \\ 0 & 1 & 1 & 0 \\ 0 & 0 & 1  & 1 \\ 0 & 1 & 0  & 1
\end{array}; ~~
\begin{array}{cccc}
1 & 0 & 0 & 0  \\ 0 & 1 & 1 & 0 \\ 0 & 0 & 1  & 1 \\ 0 & 1 & 1  & 1
\end{array}; ~~
\begin{array}{cccc}
1 & 1 & 0 & 0  \\ 1 & 1 & 0 & 0 \\ 0 & 0 & 1  & 1 \\ 0 & 0 & 1  & 1
\end{array}; ~~
\begin{array}{cccc}
1 & 1 & 0 & 0  \\ 0 & 1 & 1 & 0 \\ 0 & 0 & 1  & 1 \\ 1 & 0 & 0  & 1
\end{array}; ~~
\\
\begin{array}{cccc}
1 & 0 & 0 & 0  \\ 0 & 1 & 1 & 0 \\ 0 & 1 & 1  & 1 \\ 0 & 1 & 1  & 1
\end{array}; ~~
\begin{array}{cccc}
0 & 1 & 1 & 1  \\ 1 & 1 & 0 & 0 \\ 1 & 0 & 1  & 0 \\ 1 & 0 & 0  & 1
\end{array}; ~~
\begin{array}{cccc}
1 & 1 & 1 & 0  \\ 0 & 0 & 1 & 1 \\ 1 & 0 & 0  & 1 \\ 1 & 1 & 0  & 0
\end{array}.
\end{gather*}

The remaining planes $P_2$, $P_3$, and $P_4$ may have support of any doubly stochastic matrix of order $4$, which can be enumerated  through  exhaustive search using Proposition~\ref{totalsup}  (7443 supports in total,  considering supports of equivalent matrices as distinct). 

Giving these possibilities for planes $P_1$, $P_2$, $P_3$, and $P_4$, we construct $3$-dimensional $(0,1)$-matrices $T$ with possible supports by sequentially adding all feasible planes $P_{i+1}$ to a given sequence of planes $P_1, \ldots, P_i$. To eliminate matrices $T$ that cannot have a support of a vertex of $\Omega_4^3$,  at each step we  check condition (C1) and that the support of this partial matrix is not excessively large. Also,  we construct the  incidence matrix $L$ between the lines and the support of the partial matrix and confirm that the rank of $L$ equals the size of the support. If this condition is not met, the support of the partial matrix contains a zero-sum set.  Finally, after completing the  construction of the full matrix $T$, we check that all other $2$-dimensional planes have support of doubly stochastic matrices (using Proposition~\ref{totalsup}). 

To identify all vertices of the polytope  $\Omega_4^3$,  we perform steps 2--4 of Algorithm 1  for  each resulting matrix $T$. We establish that there are  $533$ equivalence classes of vertices $\Omega_4^3$. A list of their representatives  can be found in~\cite{vert34.data}. For each vertex, we compute the size of support,  permanent,  and the order of its automorphism group.  

Our analysis reveals that there are $11$ symmetric vertices of $\Omega_4^3$ corresponding to the lines 
% 0, 1, 6,   8, 9,   33,   40,   130, 213, 265, and 16$+$   
 1, 2, 6, 11, 27, 120,  142, 143, 146,  485,  and 527
  in  list~\cite{vert34.data}.

The distribution of support sizes for vertices $A \in \Omega_4^3 $ is presented in Table 3. Table~4 provides the  distribution of  the least common multiple of the  denominators of the entries  in the  vertices, i.e., the minimal integer positive values $\Delta$ such  that $\Delta \cdot  A$ is an integer matrix.
\begin{center}
\begin{tabular}{c||c|c|c|c|c|c|c|c|c|c|c|c|c}
$N(A)$ & 16 & 25 & 27 & 28 & 29 & 30 & 31 & 32 &  33 & 34 & 35 & 36 & 37 \\
\hline
\# of $A$ & 2 & 1 & 2 & 1 & 2 & 7 & 7 & 16 &  22 & 31 &  103 & 103 & 236 
\end{tabular}

Table 3. Sizes of supports of vertices of $\Omega_4^3$. 
\end{center}

\begin{center}
\begin{tabular}{c||c|c|c|c|c|c|c|c}
$\Delta$ & 1 & 2 & 3 & 4 & 5 & 6 & 7 & 8  \\
\hline
\# of $A$ & 2 & 18 & 155 & 215 & 104 & 32 & 5 & 2 
\end{tabular}

Table 4. LCM  of denominators of entries of vertices $A$ of $\Omega_4^3$. 
\end{center}

\section{Product construction of vertices} \label{prodconstrsec}

In this section, we propose constructions of vertices of the Birkhoff polytope $\Omega_n^d$ based on the Kronecker and dot products of multidimensional matrices. 

Let $A$ be a $d$-dimensional matrix of order $n_1$ and $B$ be a $d$-dimensional  matrix of order $n_2$.  The \textit{Kronecker product} $A\otimes B$ of matrices $A$ and $B$ is the $d$-dimensional matrix $C$ of order $n_1 n_2$ with entries $c_{\gamma} = a_\alpha b_\beta,$
where  $\gamma_i = \alpha_i  n_2 + \beta_i$ for each $i = 1, \ldots, d$. 

In~\cite{my.permofproduct} it was proved that the Kronecker of polystochastic matrices is a polystochastic matrix. We show that the Kronecker product of a vertex of $\Omega_n^d$ and a multidimensional permutation is also a vertex.

\begin{teorema} \label{Kronecvert}
If $A$ is a $d$-dimensional permutation of order $n_1$, and $B$ is a vertex of $\Omega^d_{n_2}$, then $A \otimes B$ is a vertex of the polytope $\Omega^d_{n_1 n_2}$ with  $N(A \times B) = n_1^d \cdot N(B)$.
\end{teorema}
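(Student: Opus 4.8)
The plan is to verify the vertex condition via Proposition~\ref{JRtradecrit}: writing $C:=A\otimes B$, it is enough to show that the only zero-sum matrix supported inside $\supp(C)$ is the zero matrix. Two preliminary observations are immediate. First, $C\in\Omega^d_{n_1n_2}$, by the result of~\cite{my.permofproduct} that a Kronecker product of polystochastic matrices is polystochastic (or directly: a line sum of $C$ factors as the product of a line sum of $A$ and a line sum of $B$, each equal to $1$). Second, the map $(\alpha,\beta)\mapsto\gamma$ with $\gamma_i=\alpha_i n_2+\beta_i$ is a bijection $I^d_{n_1}\times I^d_{n_2}\to I^d_{n_1n_2}$ under which $c_\gamma=a_\alpha b_\beta$; hence, identifying an index with the pair $(\alpha,\beta)$, we have $\supp(C)=\supp(A)\times\supp(B)$ and $N(C)=N(A)\,N(B)$, which gives the asserted support count.

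Now let $Z$ be any zero-sum matrix of order $n_1n_2$ with $\supp(Z)\subseteq\supp(C)=\supp(A)\times\supp(B)$; I must prove $Z=0$. Fix $\alpha^0\in\supp(A)$ and consider the slice $Z^{\alpha^0}$, the $d$-dimensional matrix of order $n_2$ with entries $(Z^{\alpha^0})_\beta=z_{(\alpha^0,\beta)}$. The crux is the claim that $Z^{\alpha^0}$ is again a zero-sum matrix whose support lies in $\supp(B)$; the support statement is clear. For the zero-sum property, take a line of $Z^{\alpha^0}$, say the one in direction $i$ determined by fixed values $\beta_j$ for $j\neq i$; its line sum is $\sum_{\beta_i}z_{(\alpha^0,\beta)}$. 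Compare it with the line $\ell$ of $Z$ in direction $i$ whose $d-1$ fixed coordinates, expressed in the two factors, are $\alpha^0_j$ and $\beta_j$ for $j\neq i$. Along $\ell$ the $i$-th combined coordinate sweeps out all pairs $(\alpha_i,\beta_i)$, so $0=\sum_{\ell}z=\sum_{\alpha_i}\sum_{\beta_i}z_{(\alpha,\beta)}$, where $\alpha$ has $\alpha_j=\alpha^0_j$ and $\beta$ has the chosen $\beta_j$ for $j\neq i$. A summand can be nonzero only when $\alpha\in\supp(A)$; but the line of $A$ in direction $i$ through $\alpha^0$ contains a single support index---namely $\alpha^0$, since $A$ is a $(0,1)$ polystochastic matrix---so $\alpha_i=\alpha^0_i$ is forced, and the double sum reduces to $\sum_{\beta_i}z_{(\alpha^0,\beta)}$, which is therefore $0$. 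This proves the claim.

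Once the claim is in hand the conclusion is immediate: $Z^{\alpha^0}$ is a zero-sum matrix supported inside $\supp(B)$, so Proposition~\ref{JRtradecrit} applied to the vertex $B$ forces $Z^{\alpha^0}=0$, i.e.\ $z_{(\alpha^0,\beta)}=0$ for all $\beta$. Since this holds for every $\alpha^0\in\supp(A)$ and $\supp(Z)\subseteq\supp(A)\times\supp(B)$, we conclude $Z=0$; by Proposition~\ref{JRtradecrit}, $C=A\otimes B$ is a vertex of $\Omega^d_{n_1n_2}$. The only step that needs genuine care is the collapsing of the line sum above, and it is exactly there that the hypothesis that $A$ is a \emph{permutation} (rather than an arbitrary vertex of $\Omega^d_{n_1}$) is used in full strength: if a line of $A$ carried two support indices, a line of $Z$ would blend two distinct $\alpha$-slices and could not be analysed slice by slice. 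One could instead run the argument through Proposition~\ref{vertexsupp}, checking that the rank of $L(C)$ equals $N(C)$ by writing the incidence matrix $L(C)$ as a block pattern assembled from $L(B)$ and the combinatorial structure of $A$, but the zero-sum route above sidesteps the bookkeeping with ranks and with the support-size bound.
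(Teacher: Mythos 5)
Your proof is correct, but it takes a genuinely different route from the paper. The paper argues directly with the definition of a vertex: it assumes a nontrivial convex combination $C=\lambda D_1+(1-\lambda)D_2$, observes that $\supp(D_i)\subseteq\supp(C)$ forces $D_1,D_2$ to inherit the same block structure as $C$, and that (because each line of $C$ meets exactly one nonzero block) the restriction of $D_i$ to a nonzero block is polystochastic; a block with $D_1'\neq D_2'$ then yields a nontrivial convex combination of $B$, a contradiction. You instead invoke the zero-sum characterization (Proposition~\ref{JRtradecrit}) and show that any zero-sum matrix $Z$ with $\supp(Z)\subseteq\supp(C)$ has every $\alpha^0$-slice $Z^{\alpha^0}$ zero-sum and supported in $\supp(B)$, hence zero. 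The two arguments are essentially dual (a nontrivial convex combination $C=\lambda D_1+(1-\lambda)D_2$ is equivalent to a nonzero zero-sum matrix $D_1-D_2$ supported inside $\supp(C)$), and both hinge on the same decisive fact: because $A$ is a $(0,1)$ polystochastic matrix, each line of $C$ meets exactly one nonzero block, so line sums do not mix blocks. Your version has the small advantage that it avoids having to justify that the convex-combination summands respect the block structure and remain polystochastic per block; that step is stated somewhat tersely in the paper. One caveat: you write that $N(C)=N(A)N(B)$ ``gives the asserted support count,'' but for a $d$-dimensional permutation of order $n_1$ one has $N(A)=n_1^{d-1}$, so your (correct) count is $n_1^{d-1}N(B)$, which does not literally match the $n_1^{d}\cdot N(B)$ printed in the theorem statement; the printed exponent appears to be a slip in the paper, but you should flag the discrepancy rather than assert agreement.
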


\begin{proof}
Since $A$ is a permutation matrix, the definition of the Kronecker product implies that the matrix  $C = A \otimes B$ is composed of $n_1^d$  submatrices (blocks) of order $n_2$,  where each block is either the matrix $B$ or the  zero matrix of order $n_2$. Indeed, if  $a_{\alpha} = 1$, then  $c_{\gamma} = b_\beta$  for all indices $\gamma$  with $\gamma_i = \alpha_i  n_2 + \beta_i$,  $i = 1, \ldots, d$, and if $a_{\alpha} = 1$, then  $c_{\gamma} = 0$ for all indices $\beta$ and $\gamma$ such that    $\gamma_i = \alpha_i  n_2 + \beta_i$,  $i = 1, \ldots, d$. Since $A$ is a multidimensional permutation, each line in $C$ intersects exactly one block $B$.

If $C = A \otimes B$ is not a vertex of $\Omega_{n_1 n_2}^d$, then there exists a nontrivial   convex combination $C = \lambda D_1  + (1 -  \lambda) D_2$, where  $D_1$ and $D_2 $ are polystochastic matrices different from $C$.  Moreover, matrices $D_1$  and $D_2$ also have a block structure similar to the matrix $C$.  Then there is a block $B$ in the matrix $C$ for which we have a nontrivial convex combination $B = \lambda D'_1  + ( 1 - \lambda) D'_2$, where $D'_1$ and $D'_2$ are  the corresponding  blocks of matrices $D_1$ and $D_2$. Note that matrices $D'_1$ and $D'_2$ are polystochastic, because $C$ is a polystochastic matrix and $B$ is a polystochastic block of $C$. Thus, we obtain a nontrivial convex combination for the matrix $B$ in $\Omega_{n_2}^d$: a contradiction.
\end{proof}

\begin{zam}
The Kronecker product of two arbitrary vertices of the Birkhoff polytope is not necessarily to be a vertex. For example, for the  vertex $V \in \Omega_3^3$ different from the multidimensional permutation, the product $V \otimes V$ is not a vertex of $\Omega^3_9$. 
\end{zam}

\begin{zam}
In Theorem~\ref{Kronecvert}, we can replace each block $B$ in the Kronecker product $A \otimes B$ by other vertices of $\Omega_{n_2}^d$.  The same reasoning gives that  such a matrix is also a vertex of $\Omega_{n_1 n_2}^d$. 
\end{zam}

 Let $A$  be a $d_1$-dimensional matrix of order $n$  and $B$ be  a $d_2$-dimensional matrix of order $n$. We define the \textit{dot product} $A \cdot B$ of matrices $A$ and $B$ to be the $(d_1 + d_2 - 2)$-dimensional matrix  $C$  such  that $c_\gamma = \sum\limits_{i=1}^n a_{\alpha, i} b_{i, \beta}$, where $\gamma = \alpha \beta$ is the concatenation of indices $\alpha \in I_n^{d_1 -1}$ and $\beta \in I_n^{d_2 -1}$.
 
In~\cite{my.permofproduct} it was proved that the dot product of polystochastic matrices is a polystochastic matrix. To prove  that the dot product of a vertex of $\Omega_n^d$ and a multidimensional permutation is  a vertex, we need the following lemma.

 \begin{lemma} \label{planeofdot}
Let  $A$ be a $d_1$-dimensional permutation of order $n$ and $B$ be a $d_2$-dimensional matrix of order $n$. Then all $d_2$-dimensional planes  of  $A \cdot B$  of  direction, in which the first $d_1 - 2$ positions of indices are fixed and the last $d_2$ positions  vary, are  matrices equivalent to the matrix $B$.  
 \end{lemma}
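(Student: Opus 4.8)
The plan is to unwind the definition of the dot product and see that each of the relevant $d_2$-dimensional planes is literally a copy of $B$ with its hyperplanes permuted. Fix the first $d_1-2$ coordinates to some value $\alpha' \in I_n^{d_1-2}$; the plane in question consists of the entries $c_{\alpha' , \beta}$ where $\beta$ ranges over $I_n^{d_2-1}$. By definition of $A \cdot B$, writing $\alpha = \alpha' i$ for the index of $A$ obtained by appending the summation coordinate,
\[
c_{\alpha', \beta} \;=\; \sum_{i=0}^{n-1} a_{\alpha' , i}\, b_{i, \beta}.
\]
Since $A$ is a $d_1$-dimensional permutation, the line of $A$ in the last direction through $\alpha'$ — i.e.\ the set of entries $a_{\alpha', i}$ for $i \in \{0,\dots,n-1\}$ — contains exactly one $1$, say at $i = \pi(\alpha')$, and zeros elsewhere. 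Hence the sum collapses to a single term: $c_{\alpha', \beta} = b_{\pi(\alpha'), \beta}$ for every $\beta$.

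This already shows that the plane obtained by fixing the first $d_1-2$ coordinates to $\alpha'$ is the hyperplane $\{ b_{\pi(\alpha'), \beta} : \beta \in I_n^{d_2-1}\}$ of $B$ with its first coordinate fixed to $\pi(\alpha')$. But that hyperplane of $B$ is itself a $(d_2-1)$-dimensional object, not $d_2$-dimensional, so I need to be more careful about which plane of $A\cdot B$ the lemma actually refers to. The natural reading is that one fixes the first $d_1-2$ coordinates \emph{and one of the remaining $d_2$ coordinates is the old summation index}, which after the permutation $\pi$ is free again. Concretely: in $C = A\cdot B$ of dimension $d_1+d_2-2$, fix positions $1,\dots,d_1-2$ to $\alpha'$; the free positions are $d_1-1, d_1, \dots, d_1+d_2-2$, which is exactly $d_2$ positions, and the computation above shows the resulting submatrix has entries $c_{\alpha', j, \beta} = \sum_i a_{\alpha', j, i} b_{i,\beta}$. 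Now $a_{\alpha', j, i}$, as $j$ and $i$ vary, is a $2$-dimensional plane of the permutation $A$, hence a permutation matrix $\tau_{\alpha'}$ of order $n$; so $c_{\alpha',j,\beta} = b_{\tau_{\alpha'}(j), \beta}$, which is precisely $B$ with its first direction permuted by $\tau_{\alpha'}$, i.e.\ a matrix equivalent to $B$.

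So the key steps, in order, are: (1) write out $c_\gamma$ for $\gamma$ in the specified plane, isolating the summation index and the coordinate it "becomes"; (2) observe that fixing the first $d_1-2$ coordinates of the $d_1$-dimensional permutation $A$ leaves a $2$-dimensional plane of $A$, which is a genuine permutation matrix because $A$ is polystochastic $(0,1)$; (3) substitute to get $c_{\alpha',j,\beta} = b_{\tau_{\alpha'}(j),\beta}$ and recognize the right-hand side as $B$ with a permutation applied to one direction, hence equivalent to $B$ in the sense defined in the paper. I expect the only genuine subtlety — and the thing worth stating carefully rather than the routine index-chasing — is step (2): correctly identifying that the $2$-dimensional plane of $A$ cut out by fixing the first $d_1-2$ coordinates is a permutation matrix, which uses that every line of it (in either of its two directions) is a line of $A$ and therefore sums to $1$ with $(0,1)$ entries. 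Everything after that is bookkeeping about concatenation of indices.
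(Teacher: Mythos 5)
Your final argument is correct, and it takes a genuinely different route from the paper. The paper proves the lemma by induction on $d_1$: the base case $d_1=2$ observes that $A\cdot B$ is $B$ with its first-direction hyperplanes permuted, and the inductive step splits $A$ into its hyperplanes $\Gamma_1,\ldots,\Gamma_n$ of the first direction (each again a multidimensional permutation), notes that the first-direction hyperplanes of $A\cdot B$ are $\Gamma_1\cdot B,\ldots,\Gamma_n\cdot B$, and applies the inductive hypothesis. You instead collapse everything in one step: fixing the first $d_1-2$ coordinates of $A$ cuts out a $2$-dimensional plane whose lines in both directions are lines of $A$, hence a permutation matrix $\tau_{\alpha'}$, so $c_{\alpha',j,\beta}=\sum_i a_{\alpha',j,i}b_{i,\beta}=b_{\tau_{\alpha'}(j),\beta}$, which is $B$ with one direction's hyperplanes permuted and therefore equivalent to $B$. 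Your direct computation is shorter, avoids the induction, and exhibits explicitly the permutation relating each plane to $B$; the paper's induction is in effect performing the same collapse one coordinate at a time, at the cost of more bookkeeping but staying entirely at the level of hyperplane decompositions. One caveat: your opening display, with $\alpha'\in I_n^{d_1-2}$ and only $\beta$ free, is ill-formed (the index $(\alpha',i)$ has $d_1-1$ components and the plane described is only $(d_2-1)$-dimensional), but you identify and repair this yourself, and the corrected version with the extra free coordinate $j$ is the right reading of the lemma, so the proof as finally stated is sound.
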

 
 \begin{proof}
 The proof is by induction on $d_1$. For $d_1 = 2$, the matrix $A$ is a  permutation matrix, and the dot product $A \cdot B$ is a $d_2$-dimensional matrix equivalent to the matrix in which hyperplanes of  the first direction are permuted according to the permutation $A$.
 
 Assume that $d_1 > 2$. Let us present the matrix $A$ as a union of $n$ parallel hyperplanes $\Gamma_1, \ldots, \Gamma_n$ of the first direction. Since $A$ is a multidimensional permutation, every hyperplane $\Gamma_i$ of $A$ is also a multidimensional permutation.  The definition of the dot product implies that the  hyperplanes of the first direction of $A \cdot B$ are equal to $\Gamma_1 \cdot B, \ldots, \Gamma_n \cdot B$. By the inductive assumption, for each $i = 1, \ldots, n$, all $d_2$-dimensional planes of $\Gamma_i \cdots B$  of direction,  in which the first $d_1 - 3$ positions of indices are fixed and the last $d_2$ positions  vary,  are equivalent to the matrix $B$. Then the similar statement holds for the matrix $A \cdot B$. 
 \end{proof}

\begin{teorema} \label{dotconstruction}
If $A$ is a $d_1$-dimensional permutation of order $n$, and $B$ is a  vertex of $\Omega^{d_2}_{n}$, then $A \cdot B$ is a vertex of the polytope $\Omega^{d_1 + d_2 -2}_{n}$ with  $N(A \cdot  B) = n^{d_1 -2} \cdot N(B)$.
\end{teorema}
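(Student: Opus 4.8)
The plan is to mimic the proof of Theorem~\ref{Kronecvert}, replacing the block decomposition of a Kronecker product by the planar decomposition supplied by Lemma~\ref{planeofdot}. Write $C = A \cdot B$, a $(d_1+d_2-2)$-dimensional matrix of order $n$, and consider the family $\mathcal{F}$ of $d_2$-dimensional planes of $C$ obtained by fixing the first $d_1 - 2$ positions of the index and letting the last $d_2$ positions vary. Since $(d_1 + d_2 - 2) - (d_1 - 2) = d_2$, these are genuine $d_2$-dimensional planes, there are exactly $n^{d_1-2}$ of them, and every index of $C$ lies in exactly one member of $\mathcal{F}$; thus $\mathcal{F}$ partitions the index set of $C$. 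By Lemma~\ref{planeofdot}, every plane $P \in \mathcal{F}$ is equivalent to $B$, so in particular $N(P) = N(B)$, and summing over $\mathcal{F}$ yields $N(C) = n^{d_1-2}\cdot N(B)$, the claimed support size. (It also follows that $C$ is polystochastic, as already established in~\cite{my.permofproduct}, since every line of $C$ lies inside some $P\in\mathcal F$ and $P$ is polystochastic.)

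Next I would show that $C$ is a vertex. Suppose not; then there is a nontrivial convex combination $C = \lambda D_1 + (1-\lambda) D_2$ with $D_1, D_2 \in \Omega_n^{d_1+d_2-2}$, and necessarily $D_1 \ne D_2$, since $D_1 = D_2$ would force $C = D_1$, contradicting nontriviality. For each $P \in \mathcal{F}$, restricting this identity to the indices of $P$ gives $P = \lambda P_1 + (1-\lambda) P_2$, where $P_1$ and $P_2$ are the planes of $D_1$ and $D_2$ occupying the same positions as $P$. Because every line contained in those positions is a line of $D_i$, the matrices $P_1, P_2$ are polystochastic of order $n$ and dimension $d_2$. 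Since the planes of $D_1$ in these directions partition its index set, and likewise for $D_2$, and since $D_1 \ne D_2$, there is some $P \in \mathcal{F}$ with $P_1 \ne P_2$; for that plane $P = \lambda P_1 + (1-\lambda) P_2$ is a nontrivial convex combination. Applying to this identity the equivalence transformation carrying $P$ to $B$ (which exists by Lemma~\ref{planeofdot} and maps polystochastic matrices to polystochastic matrices), we obtain a nontrivial convex combination of $B$ by elements of $\Omega_n^{d_2}$, contradicting the assumption that $B$ is a vertex.

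The routine parts are the index bookkeeping — that fixing the first $d_1-2$ coordinates partitions the indices of $C$ into $n^{d_1-2}$ planes — and the observation that a plane of a polystochastic matrix is again polystochastic; neither presents any difficulty. The one point that needs slight care, and which I regard as the crux, is the passage through the equivalence of Lemma~\ref{planeofdot}: one must note that equivalent matrices differ only by permutations of parallel hyperplanes and of directions, that such operations are invertible and preserve both the polytope $\Omega_n^{d_2}$ and the property of being a nontrivial convex combination, and that one may apply a single such operation simultaneously to $P$, $P_1$, and $P_2$. Once this is in place, the argument is in spirit identical to that of Theorem~\ref{Kronecvert}.
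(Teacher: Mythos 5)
Your proof is correct and follows essentially the same argument as the paper's: restrict a putative nontrivial convex combination of $A\cdot B$ to the $d_2$-dimensional planes furnished by Lemma~\ref{planeofdot}, find a plane where the restriction is nontrivial, and transport it through the equivalence to contradict that $B$ is a vertex. You additionally spell out the support-size count and the fact that equivalences preserve nontrivial convex combinations, both of which the paper leaves implicit.
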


\begin{proof}
If $C = A \cdot B$ is not a vertex of $\Omega^{d_1 + d_2 -2}_{n}$, then there exists a nontrivial   convex combination $C = \lambda D_1  + (1 -\lambda) D_2$, where  $D_1$ and $D_2 $ are polystochastic matrices that are different from the matrix $C$.  By Lemma~\ref{planeofdot}, all  $d_2$-dimensional planes $P$  of  $C$ of some direction are equivalent to the matrix $B$.   Let  us choose  the plane $P$ so that  there is a  nontrivial convex combination $P = \lambda D'_1  + (1 -\lambda) D'_2$, where matrices $D'_1$ and $D'_2$ are polystochastic and different from the plane $P$.  Then there is a nontrivial combination for the matrix $B$: a contradiction.
\end{proof}

\begin{zam}
The dot product of two arbitrary vertices of the Birkhoff polytope is not necessarily to be a vertex. For example, for the vertex $V \in \Omega_3^3$ different from the multidimensional permutation,  the product $V \cdot V$ is not a vertex of $\Omega^4_3$. 
\end{zam}

At last, we give one more way to construct vertices of the Birkhoff polytope $\Omega_n^d$.

\begin{lemma}
Let $A$ be a $d$-dimensional polystochastic matrix of order $n$ and $\Gamma_1, \ldots, \Gamma_n$ be its hyperplanes of some direction. If $\Gamma_1, \ldots, \Gamma_{n-1}$ are vertices of $\Omega_n^{d-1}$, then $A$ is a vertex of $\Omega_n^d$. 
\end{lemma}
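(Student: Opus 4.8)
The plan is to argue by contradiction, exploiting the observation that a nontrivial convex combination of a polystochastic matrix is \emph{inherited} hyperplane-by-hyperplane, together with the hypothesis that $n-1$ of the hyperplanes $\Gamma_1, \ldots, \Gamma_n$ are already vertices. Suppose $A = \lambda D_1 + (1-\lambda) D_2$ with $0 < \lambda < 1$ and $D_1, D_2 \in \Omega_n^d$. Writing each $D_j$ as a union of its $n$ hyperplanes $\Gamma_1^{(j)}, \ldots, \Gamma_n^{(j)}$ of the same fixed direction, the fact that $A$, $D_1$, $D_2$ are all polystochastic forces each $\Gamma_i^{(j)}$ to be a $(d-1)$-dimensional polystochastic matrix of order $n$ (every line of a hyperplane is a line of the ambient matrix, so its entries sum to $1$), and $\Gamma_i = \lambda \Gamma_i^{(1)} + (1-\lambda)\Gamma_i^{(2)}$ for each $i$.

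Now apply the hypothesis: for $i = 1, \ldots, n-1$, the hyperplane $\Gamma_i$ is a vertex of $\Omega_n^{d-1}$, so the only convex combination expressing it is the trivial one; hence $\Gamma_i^{(1)} = \Gamma_i^{(2)} = \Gamma_i$ for all $i \leq n-1$. It remains to handle the last hyperplane $\Gamma_n$. The key point is that once the first $n-1$ hyperplanes of a polystochastic matrix (in a given direction) are fixed, the last one is determined: for any line $\ell$ of $A$ running in that direction, the entry of $\ell$ in hyperplane $\Gamma_n$ equals $1$ minus the sum of the entries of $\ell$ in $\Gamma_1, \ldots, \Gamma_{n-1}$. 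Since $D_1$ and $D_2$ agree with $A$ on the first $n-1$ hyperplanes, this formula gives $\Gamma_n^{(1)} = \Gamma_n^{(2)} = \Gamma_n$ as well. Therefore $D_1 = D_2 = A$, contradicting nontriviality, so $A$ is a vertex of $\Omega_n^d$.

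I would state the two elementary facts used above as explicit observations before the contradiction argument: first, that a hyperplane of a polystochastic matrix is itself polystochastic and that convex combinations pass to parallel hyperplanes componentwise; second, that the ``last'' hyperplane of a polystochastic matrix is uniquely recoverable from the other $n-1$ hyperplanes of its direction. Neither requires more than the definition of polystochasticity, so the proof is short. The only place that needs a moment's care — and the natural candidate for the ``main obstacle,'' though it is really a routine bookkeeping point — is making sure that the lines used to reconstruct $\Gamma_n$ are exactly the lines of direction equal to the common fixed direction of the $\Gamma_i$, and that every entry of $\Gamma_n$ lies on exactly one such line; this is immediate from the structure of $1$-dimensional planes but should be spelled out so the uniqueness step is unambiguous.
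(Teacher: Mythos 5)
Your proposal is correct and follows the paper's argument essentially verbatim: restrict any convex combination to the parallel hyperplanes, use the vertex hypothesis to force agreement on $\Gamma_1,\ldots,\Gamma_{n-1}$, and then recover $\Gamma_n$ from the line sums. The paper's proof is the same, only stated more tersely (it cites the fact that a polystochastic matrix is uniquely determined by $n-1$ parallel hyperplanes without spelling out the line-by-line reconstruction you make explicit).
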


\begin{proof}

Since hyperplanes $\Gamma_1, \ldots, \Gamma_{n-1}$  of $A$ are vertices of $\Omega_n^{d-1}$, they do not have nontrivial   convex combinations. In particular, for every convex combination  $A = \lambda B_1  + (1 - \lambda) B_2$, the  polystochastic matrices $B_1$ and $B_2$ coincide in these $n-1$ hyperplanes. Since every $d$-dimensional polystochastic  matrix of order $n$ can be uniquely restored by parallel $n-1$ hyperplanes, matrices $B_1$ and $B_2$ are equal. Therefore, every convex combination for the matrix $A$ is trivial, and $A$ is a vertex of $\Omega_n^d$. 
\end{proof}

\section{Symmetric vertex of $\Omega_3^d$} \label{symvertsec}

The aim of the present section  is to  construct    symmetric vertices of $\Omega_3^d$  for all $d \geq 4$ generalizing the vertex~\#7 of the polytope $\Omega_3^4$. The  entries of these vertices will be equal to  $0, \nicefrac{1}{3}, \nicefrac{2}{3}$, or $1$, and, as far as we know, they give  the first nontrivial series of vertices of $\Omega_n^d$ when $n$ is fixed ($n = 3$) and $d \rightarrow \infty$. 

Our construction requires the following lemma stating that every  zero-sum matrix  of order $n$ is  uniquely defined by the values in a submatrix of order $n-1$.

\begin{lemma} \label{minorrecover}
Let $A$ be a $d$-dimensional zero-sum matrix of order $n$.  Let $\alpha, \beta \in I_n^d$  be such that    $\rho(\alpha, \beta) = d-k$ and $\Gamma(\alpha, \beta)$ be the $k$-dimensional plane in the submatrix  $A_{\beta}$ composed of all indices $\gamma$ satisfying  $\gamma_i = \alpha_i$ for all $i $ such that $\alpha_i \neq \beta_i$. Then  % $S = \{ i: \alpha_i \neq \beta_i\}$, $|S| = d-k$, we have 
$$a_\alpha =   (-1)^k \sum\limits_{\gamma \in \Gamma(\alpha,\beta)} a_{\gamma} .$$
\end{lemma}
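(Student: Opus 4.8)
The plan is to prove the formula by induction on $k$. The base case $k = 0$ means $\rho(\alpha,\beta) = d$, i.e. $\alpha$ and $\beta$ differ in every coordinate, so $\alpha$ itself lies in the submatrix $A_\beta$ (deleting the $dn$ hyperplanes through $\beta$ removes exactly the hyperplanes $\{x_i = \beta_i\}$, none of which contains $\alpha$). In this case $\Gamma(\alpha,\beta)$ is the single index $\alpha$, and the claimed identity $a_\alpha = (-1)^0 a_\alpha$ is trivial.

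For the inductive step, suppose $k \geq 1$, so there is a position $j$ with $\alpha_j = \beta_j$; pick any value $v \in \{0,\ldots,n-1\}$ with $v \neq \alpha_j$ (possible since $n \geq 2$), and let $\alpha' = (\alpha_1,\ldots,\alpha_{j-1}, v, \alpha_{j+1},\ldots,\alpha_d)$. Then $\rho(\alpha',\beta) = d - (k-1)$, since $\alpha'$ now differs from $\beta$ in position $j$ as well as in the $d-k$ positions where $\alpha$ already differed. First I would observe that the line $\ell$ in direction $j$ through $\alpha$ (equivalently through $\alpha'$) has zero entry-sum, so $a_\alpha = -\sum_{u \neq \alpha_j} a_{\alpha^{(u)}}$, where $\alpha^{(u)}$ is $\alpha$ with its $j$-th coordinate replaced by $u$. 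Next I would apply the inductive hypothesis to each $\alpha^{(u)}$ with $u \neq \alpha_j$: each such index has Hamming distance $d-(k-1)$ from $\beta$, so $a_{\alpha^{(u)}} = (-1)^{k-1}\sum_{\gamma \in \Gamma(\alpha^{(u)},\beta)} a_\gamma$. The key bookkeeping step is then to check that the disjoint union $\bigcup_{u \neq \alpha_j}\Gamma(\alpha^{(u)},\beta)$ is exactly the $k$-dimensional plane $\Gamma(\alpha,\beta)$ in $A_\beta$: indeed $\Gamma(\alpha,\beta)$ is cut into $n-1$ parallel $(k-1)$-dimensional slices by fixing the $j$-th coordinate to each admissible value $u$ (admissible meaning $u \neq \beta_j = \alpha_j$ for it to survive in $A_\beta$), and the slice with $j$-th coordinate $u$ is precisely $\Gamma(\alpha^{(u)},\beta)$. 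Combining, $a_\alpha = -(-1)^{k-1}\sum_{\gamma \in \Gamma(\alpha,\beta)} a_\gamma = (-1)^k \sum_{\gamma \in \Gamma(\alpha,\beta)} a_\gamma$, completing the induction.

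The main obstacle, and the only point requiring genuine care, is the indexing argument that $\Gamma(\alpha,\beta)$ decomposes as the disjoint union of the $\Gamma(\alpha^{(u)},\beta)$ over $u \neq \alpha_j$ — one must verify that the definition of $\Gamma(\cdot,\beta)$ (fixing coordinate $i$ to $\alpha_i$ exactly at the positions where $\alpha_i \neq \beta_i$, and letting it range freely where $\alpha_i = \beta_i$, all inside $A_\beta$) interacts correctly with changing the $j$-th coordinate of $\alpha$, and in particular that the value $u = \beta_j$ would fall outside $A_\beta$ and so is correctly omitted from both sides. Everything else is the routine zero-sum-along-a-line identity and the sign computation $-(-1)^{k-1} = (-1)^k$. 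One should also note at the outset that $k \leq d$ and that for $k = d$ (i.e. $\alpha = \beta$) the statement is vacuous or trivial, so we may assume $k < d$ and hence $\alpha \neq \beta$; the induction runs entirely inside $A_\beta$, which is well defined.
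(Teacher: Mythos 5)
Your proof is correct and takes essentially the same route as the paper: induction on $k$, the zero-sum relation along the line in a coordinate where $\alpha$ and $\beta$ agree, the inductive hypothesis applied to the $n-1$ indices $\alpha^{(u)}$ at distance $d-k+1$ from $\beta$, and the decomposition of $\Gamma(\alpha,\beta)$ into the slices $\Gamma(\alpha^{(u)},\beta)$ (which you in fact verify more explicitly than the paper does). The only small quibble is your dismissal of the case $k=d$ (i.e.\ $\alpha=\beta$) as ``vacuous or trivial'' --- it is a genuine identity, $a_\beta=(-1)^d\sum_{\gamma\in A_\beta}a_\gamma$, but your inductive step applies to it verbatim, so the restriction to $k<d$ is simply unnecessary.
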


\begin{proof}

We prove the lemma by induction on $k$. 

For $k = 0$ it is nothing to prove. If $k = 1$, then index $\alpha$ coincides with    $\beta$  in exactly one component. In this case,  $a_\alpha$  is the following sum in the $1$-dimensional plane $\Gamma(\alpha,\beta)$ of the submatrix $A_\beta$:
$$a_{\alpha} = - \sum\limits_{\gamma \in \Gamma(\alpha,\beta)} a_{\gamma}.$$

Let us prove the induction step. Suppose that $\alpha$ is an index such that $\rho(\alpha, \beta) = d - k$.  Without loss of generality, assume that $\alpha$ and $\beta$ coincide in the first $k$ components: $\alpha_i = \beta_i$ for all $i = 1, \ldots, k$.  Let $\alpha^1, \ldots, \alpha^{n-1}$  denote all  indices that coincide with $\alpha$  in all components, except for the $k$-th component. Then  $\rho (\alpha^j, \beta) = d - k +1$ for all $j =1, \ldots, n-1$, and since $A$ is a zero-sum matrix, we have
$$a_{\alpha} = - \sum\limits_{j=1}^{n-1} a_{\alpha^j}.$$
By the induction hypothesis, 
$$a_{\alpha^j} =   (-1)^{k-1} \sum\limits_{\gamma \in \Gamma(\alpha^j,\beta)} a_{\gamma}. $$
By the definition, the  union of all planes  $ \Gamma(\alpha^j,\beta)$, $j = 1, \ldots, n-1$ is the plane $\Gamma(\alpha,\beta)$. Thus we obtain
$$
a_{\alpha} =  - \sum\limits_{j=1}^{{n-1}}  \left(  (-1)^{k-1} \sum\limits_{\gamma \in \Gamma(\alpha^j,\beta)} a_{\gamma} \right)  = 
 (-1)^k \sum\limits_{\gamma \in \Gamma(\alpha,\beta)} a_{\gamma} .
$$
\end{proof}

In what follows, we consider only $d$-dimensional matrices $A$ of order $3$ and index sets $I_3^d$. For a given index $\alpha \in I_3^d$, we denote by $|\alpha|_0, |\alpha|_1$, and $|\alpha|_2$ the number of $0$s, $1$s, and $2$s in $\alpha$, respectively.

\textbf{Construction 1.}

Let $d$ be even. Define  the entries of the symmetric $d$-dimensional matrix $A$ of order $3$ as
$$
a_{\alpha} = \begin{cases}
\nicefrac{1}{3}, \mbox{ if all }  |\alpha|_0, |\alpha|_1, |\alpha|_2 \neq 0; \\
0, \mbox{ if }  |\alpha|_0 = 0 \mbox{ and }  |\alpha|_1, |\alpha|_2 \mbox{ are even and } \neq d;\\
\nicefrac{2}{3}, \mbox{ if }  |\alpha|_0 = 0  \mbox{ and } |\alpha|_1, |\alpha|_2 \mbox{ are odd}; \\
\nicefrac{2}{3}, \mbox{ if }  |\alpha|_1 = 0 \mbox{ and }  |\alpha|_0, |\alpha|_2 \mbox{ are even and } \neq d;\\
0, \mbox{ if }  |\alpha|_1 = 0  \mbox{ and } |\alpha|_0, |\alpha|_2 \mbox{ are odd}; \\
\nicefrac{2}{3}, \mbox{ if }  |\alpha|_2 = 0 \mbox{ and }  |\alpha|_0, |\alpha|_1 \mbox{ are even and } \neq d;\\
0, \mbox{ if }  |\alpha|_2 = 0  \mbox{ and } |\alpha|_0, |\alpha|_1 \mbox{ are odd}; \\
\nicefrac{1}{3}, \mbox{ if }  |\alpha|_1 \mbox{ or } |\alpha|_2 = d; \\
1, \mbox{ if }  |\alpha|_0 = d. \\
\end{cases}
$$

For odd $d$, we define the entries of the symmetric $d$-dimensional matrix $A$ of order $3$ as
$$
a_{\alpha} = \begin{cases}
\nicefrac{1}{3}, \mbox{ if all }  |\alpha|_0, |\alpha|_1, |\alpha|_2 \neq 0; \\
0, \mbox{ if }  |\alpha|_0 = 0,  |\alpha|_1 \neq d  \mbox{ is odd, and } |\alpha|_2 \mbox{ is even};\\
\nicefrac{2}{3}, \mbox{ if }  |\alpha|_0 = 0,  |\alpha|_1  \mbox{ is even, and } |\alpha|_2 \neq d \mbox{ is odd};\\
\nicefrac{2}{3}, \mbox{ if }  |\alpha|_1 = 0,  |\alpha|_0 \neq d  \mbox{ is odd, and } |\alpha|_2 \mbox{ is even};\\
0, \mbox{ if }  |\alpha|_1 = 0,  |\alpha|_0  \mbox{ is even, and } |\alpha|_2 \neq d \mbox{ is odd};\\
0, \mbox{ if }  |\alpha|_2 = 0,  |\alpha|_0 \neq d  \mbox{ is odd, and } |\alpha|_1 \mbox{ is even};\\
\nicefrac{2}{3}, \mbox{ if }  |\alpha|_2 = 0,  |\alpha|_0  \mbox{ is even, and } |\alpha|_1 \neq d \mbox{ is odd};\\
\nicefrac{1}{3}, \mbox{ if } |\alpha|_0, |\alpha|_1 \mbox{ or } |\alpha|_2 = d. \\
\end{cases}
$$

\begin{utv}
The $d$-dimensional matrix $A$ of order $3$ from Construction $1$ is a polystochastic matrix. The size of its support is $N(A) = 3^d -  3 \cdot 2^{d-1} +2 $ if $d$ is even, and  $N(A) = 3^d -  3 \cdot 2^{d-1} +3 $ if $d$ is odd.
\end{utv}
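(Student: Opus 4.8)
The plan is to verify two things: that the line sums of $A$ equal $1$, and that the support has the claimed size. For the polystochastic property, I would fix a line $\ell$ in direction $i$, obtained by fixing all coordinates except the $i$-th to some values; let $t_0, t_1, t_2$ be the multiset of the remaining $d-1$ fixed symbols (so $t_0 + t_1 + t_2 = d-1$), and let $\alpha^{(0)}, \alpha^{(1)}, \alpha^{(2)}$ be the three indices on $\ell$, where $\alpha^{(c)}$ has the value $c$ in position $i$. Then $|\alpha^{(c)}|_c = t_c + 1$ and $|\alpha^{(c)}|_{c'} = t_{c'}$ for $c' \neq c$. I would split into cases according to which of $t_0, t_1, t_2$ are zero and the parities of the nonzero ones, read off $a_{\alpha^{(0)}}, a_{\alpha^{(1)}}, a_{\alpha^{(2)}}$ from the piecewise definition, and check the sum is $1$ in each case. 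For instance, if all $t_c > 0$ then each $\alpha^{(c)}$ still has all three symbols present (unless some $t_{c'}=0$ for $c'\neq c$, which is excluded here), so each entry is $\nicefrac13$ and the sum is $1$; if exactly one $t_c = 0$, then on $\ell$ exactly one index (namely $\alpha^{(c)}$) acquires the missing symbol and the other two indices have that symbol absent, so we land in the "$|\alpha|_{c} = 0$" branches for two of the indices and in the all-nonzero (or all-equal) branch for the third, and the parities are arranged precisely so that the three relevant values are $\nicefrac13, \nicefrac13, \nicefrac13$ or $0, \nicefrac23, \nicefrac13$ or $0, 0, 1$ etc., summing to $1$. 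The even and odd $d$ definitions differ exactly to keep this bookkeeping consistent with the parity of $d-1$, so I would treat the two parities of $d$ in parallel but separately.

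For the support size, I would count the indices $\alpha$ with $a_\alpha = 0$ and subtract from $3^d$. From the definition, $a_\alpha = 0$ precisely when exactly one of $|\alpha|_0, |\alpha|_1, |\alpha|_2$ is zero and the remaining two counts have the "bad" parity pattern (for even $d$: $|\alpha|_0 = 0$ with both others even and $\neq d$, or $|\alpha|_1 = 0$ with both others odd, or $|\alpha|_2 = 0$ with both others odd; for odd $d$: the analogous three conditions). Indices with some $|\alpha|_c = d$ are never zero, and indices with all three counts nonzero are never zero, so the zero set is contained in $\{\alpha : \text{exactly one symbol is absent}\}$. For a fixed absent symbol $c$, the indices using only the other two symbols and using each at least once number $2^d - 2$; among these I must count those with the prescribed parities of the two present-symbol multiplicities. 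Writing $|\alpha|_{c_1} = j$, $|\alpha|_{c_2} = d - j$ with $1 \le j \le d-1$, the number with $|\alpha|_{c_1} = j$ is $\binom{d}{j}$, and I need to sum $\binom{d}{j}$ over $j$ in the appropriate parity class (and $\neq d$, $\neq 0$). Using $\sum_j \binom{d}{j} = 2^d$ and $\sum_j (-1)^j\binom{d}{j} = 0$, the sum of $\binom{d}{j}$ over even $j$ equals the sum over odd $j$, each being $2^{d-1}$; removing the $j = 0$ and $j = d$ terms as needed then gives a clean count for each of the three choices of absent symbol, and adding these up yields $3\cdot 2^{d-1} - 2$ zeros for even $d$ and $3 \cdot 2^{d-1} - 3$ zeros for odd $d$, hence $N(A) = 3^d - 3\cdot 2^{d-1} + 2$ or $3^d - 3 \cdot 2^{d-1} + 3$ respectively.

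The main obstacle I anticipate is not any single hard idea but the case analysis for the polystochastic property: one must be careful that the three indices on a line can fall into genuinely different branches of the piecewise definition (e.g. one index in the "all counts nonzero" branch, another in a "$|\alpha|_c = 0$ with odd parities" branch, a third in a "$|\alpha|_c = d$" branch), and that the even/odd-$d$ split interacts correctly with whether $d-1$ is even or odd. I would organize this by the number of zero entries among $t_0, t_1, t_2$ (which can be $0$ or $1$, since $t_0+t_1+t_2 = d-1 \ge 3$ forbids two zeros only when $d \ge 5$; for $d = 4$ two of the $t_c$ can be zero, e.g. $t = (3,0,0)$, which must be checked as well), and within each such case by the parities, tabulating the triple of values and confirming the sum. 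Once the tables are filled in, both claims follow; I would double-check the boundary cases $d = 4, 5$ against vertex \#7 and the stated numbers $N = 3^4 - 24 + 2 = 59$ and $N = 3^5 - 48 + 3 = 198$.
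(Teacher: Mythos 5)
Your approach is essentially the same as the paper's: the paper also verifies the polystochastic property by tabulating the three entries along a line, organized by which of the counts $|\alpha|_0, |\alpha|_1, |\alpha|_2$ are zero or equal to $d$ and by their parities, and it counts the zero entries of $A$ with exactly the same binomial identities $\sum_{k\text{ even}}\binom{d}{k}=\sum_{k\text{ odd}}\binom{d}{k}=2^{d-1}$, yielding $3\cdot 2^{d-1}-2$ zeros for even $d$ and $3\cdot 2^{d-1}-3$ for odd $d$.

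One concrete error in your organization of the line-sum check: the parenthetical claim that two of $t_0,t_1,t_2$ can be zero only for $d=4$ is wrong. For every $d\geq 2$ the triple $t=(d-1,0,0)$ (and its permutations) occurs, namely on lines through an index $\alpha$ with $|\alpha|_c = d$; the resulting entry triples are $(1,0,0)$ for even $d$ and $(\nicefrac13,\nicefrac23,0)$ for odd $d$, as in the last three boxes of the paper's tables. If you follow your plan literally and only treat the two-zero case when $d=4$, the line-sum verification is incomplete for all $d\geq 5$. This is easy to repair — just handle the two-zero case for all $d$ — but as written it is a gap. The support-size count, by contrast, is correct and complete as stated.
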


\begin{proof}
To prove that $A$ is a polystochastic matrix, it is sufficient to check that the sum of the entries in each line is equal to $1$.  

If $d$ is even,  have the following types of lines $(\alpha, \beta, \gamma)$ in $A$ and values $a_\alpha$, $a_\beta$, and $a_\gamma$:
\begin{center}
\begin{tabular}{c|c}
$|\alpha|_0, |\alpha|_1, |\alpha|_2 \neq 0$  & $\nicefrac{1}{3}$ \\
\hline
$|\beta|_0, |\beta|_1, |\beta|_2 \neq 0$ &  $\nicefrac{1}{3}$ \\
\hline
$|\gamma|_0, |\gamma|_1, |\gamma|_2 \neq 0$& $\nicefrac{1}{3}$ \\
\end{tabular}~~~~
\begin{tabular}{c|c}
$|\alpha|_0 = 1$, $|\alpha|_1 \neq 0$ is even, $|\alpha|_2$ is odd  & $\nicefrac{1}{3}$ \\
\hline
$|\beta|_0 = 0$, $|\beta|_1$, $|\beta|_2 $  are odd &  $\nicefrac{2}{3}$ \\
\hline
$|\gamma|_0 = 0$, $|\gamma|_1, |\gamma|_2 \neq 0$ are even & $0$ \\
\end{tabular}

\medskip
\begin{tabular}{c|c}
$|\alpha|_1 = 1$, $|\alpha|_0  \neq 0$ is even, $|\alpha|_2$ is odd & $\nicefrac{1}{3}$ \\
\hline
$|\beta|_1 = 0$, $|\beta|_0, |\beta|_2 $ are odd &  $0$ \\
\hline
$|\gamma|_1 = 0$, $|\gamma|_0, |\gamma|_2 \neq 0$ are even  & $\nicefrac{2}{3}$ \\
\end{tabular}~~~~
\begin{tabular}{c|c}
$|\alpha|_2 = 1$, $|\alpha|_0 \neq 0$ is even,  $|\alpha|_1 $ is odd & $\nicefrac{1}{3}$ \\
\hline
$|\beta|_2 = 0$, $|\beta|_0$, $|\beta|_1 $ are odd &  $0$ \\
\hline
$|\gamma|_2 = 0$, $|\gamma|_0, |\gamma|_1 \neq 0$ are even & $\nicefrac{2}{3}$ \\
\end{tabular}

\medskip
\begin{tabular}{c|c}
$|\alpha|_0 = d$ & $1$ \\
\hline
$|\beta|_0 = d-1$, $|\beta|_1 = 1$  &  $0$ \\
\hline
$|\gamma|_0 = d-1$, $|\gamma|_2 =1$  & $0$ \\
\end{tabular}~~~~
\begin{tabular}{c|c}
$|\alpha|_1 = d$ & $\nicefrac{1}{3}$ \\
\hline
$|\beta|_1 = d-1$, $|\beta|_0 = 1$  &  $0$ \\
\hline
$|\gamma|_1 = d-1$, $|\gamma|_2 =1$  & $\nicefrac{2}{3}$ \\
\end{tabular}~~~~
\begin{tabular}{c|c}
$|\alpha|_2 = d$ & $\nicefrac{1}{3}$ \\
\hline
$|\beta|_2 = d-1$, $|\beta|_0 = 1$  &  $0$  \\
\hline
$|\gamma|_2 = d-1$, $|\gamma|_1 =1$  & $\nicefrac{2}{3}$  \\
\end{tabular}
\end{center}

If $d$ is odd, then there are the  following types of lines $(\alpha, \beta, \gamma)$ in $A$ and values $a_\alpha$, $a_\beta$, and $a_\gamma$:
\begin{center}
\begin{tabular}{c|c}
$|\alpha|_0, |\alpha|_1, |\alpha|_2 \neq 0$  & $\nicefrac{1}{3}$ \\
\hline
$|\beta|_0, |\beta|_1, |\beta|_2 \neq 0$ &  $\nicefrac{1}{3}$ \\
\hline
$|\gamma|_0, |\gamma|_1, |\gamma|_2 \neq 0$& $\nicefrac{1}{3}$ \\
\end{tabular}~~~~
\begin{tabular}{c|c}
$|\alpha|_0 = 1$, $|\alpha|_1,  |\alpha|_2 \neq 0$ are even & $\nicefrac{1}{3}$ \\
\hline
$|\beta|_0 = 0$, $|\beta|_1$ is odd, $|\beta|_2 \neq 0$ is even &  $0$ \\
\hline
$|\gamma|_0 = 0$, $|\gamma|_1\neq 0$ is even, $|\gamma|_2$ is odd & $\nicefrac{2}{3}$ \\
\end{tabular}

\medskip
\begin{tabular}{c|c}
$|\alpha|_1 = 1$, $|\alpha|_0,  |\alpha|_2 \neq 0$ are even & $\nicefrac{1}{3}$ \\
\hline
$|\beta|_1 = 0$, $|\beta|_0$ is odd, $|\beta|_2 \neq 0$ is even &  $\nicefrac{2}{3}$ \\
\hline
$|\gamma|_1 = 0$, $|\gamma|_0\neq 0$ is even, $|\gamma|_2$ is odd & $0$ \\
\end{tabular}~~~~
\begin{tabular}{c|c}
$|\alpha|_2 = 1$, $|\alpha|_0,  |\alpha|_1 \neq 0$ are even & $\nicefrac{1}{3}$ \\
\hline
$|\beta|_2 = 0$, $|\beta|_0$ is odd, $|\beta|_1 \neq 0$ is even &  $0$ \\
\hline
$|\gamma|_2 = 0$, $|\gamma|_0\neq 0$ is even, $|\gamma|_1$ is odd & $\nicefrac{2}{3}$ \\
\end{tabular}

\medskip
\begin{tabular}{c|c}
$|\alpha|_0 = d$ & $\nicefrac{1}{3}$ \\
\hline
$|\beta|_0 = d-1$, $|\beta|_1 = 1$  &  $\nicefrac{2}{3}$ \\
\hline
$|\gamma|_0 = d-1$, $|\gamma|_2 =1$  & $0$ \\
\end{tabular}~~~~
\begin{tabular}{c|c}
$|\alpha|_1 = d$ & $\nicefrac{1}{3}$ \\
\hline
$|\beta|_1 = d-1$, $|\beta|_0 = 1$  &  $0$ \\
\hline
$|\gamma|_1 = d-1$, $|\gamma|_2 =1$  & $\nicefrac{2}{3}$ \\
\end{tabular}~~~~
\begin{tabular}{c|c}
$|\alpha|_2 = d$ & $\nicefrac{1}{3}$ \\
\hline
$|\beta|_2 = d-1$, $|\beta|_0 = 1$  &  $\nicefrac{2}{3}$  \\
\hline
$|\gamma|_2 = d-1$, $|\gamma|_1 =1$  & $0$  \\
\end{tabular}
\end{center}

To  count the size of support of matrices $A$, we use the well-known identities 
$$\sum\limits_{\begin{array}{c} 0 \leq k \leq d, \\ k \mbox{ is even} \end{array}} {d \choose k} = 2^{d-1}~~~~ \mbox{ and }  ~~~~\sum\limits_{\begin{array}{c} 0 \leq k \leq d, \\   k \mbox{ is odd} \end{array}} {d \choose k} = 2^{d-1}.$$

They imply that for all  different  $i,j \in \{ 0,1,2\}$, the number of indices $\alpha \in I_3^d$  such that $|\alpha|_i = 0$ and $|\alpha|_j$ is even (or odd) is equal to $2^{d-1}$. Counting the number of zeroes in the matrix $A$, we see that for even $d$
$$N(A) = 3^{d} -( 2^{d-1} - 2 + 2^{d-1} + 2^{d-1}) = 3^{d} - 3 \cdot 2^{d-1} +2, $$
and for odd $d$
$$N(A) = 3^{d} -( 2^{d-1} - 1 + 2^{d-1} -1 + 2^{d-1} -1) = 3^{d} - 3 \cdot 2^{d-1} +3. $$
\end{proof}

\begin{utv}
For all $d \geq 4$  the $d$-dimensional matrix $A$ of order $3$ given by Construction~$1$ is a vertex of $\Omega_3^d$.
\end{utv}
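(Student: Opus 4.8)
The plan is to use Proposition~\ref{JRtradecrit}: it suffices to show that $\supp(A)$ contains no nonempty zero-sum set. So I would assume, for contradiction, that $B$ is a nonzero zero-sum matrix of order $3$ with $\supp(B) \subseteq \supp(A)$, and derive $B = 0$. The key leverage is Lemma~\ref{minorrecover}, which says a zero-sum matrix of order $3$ is determined by any submatrix $B_\beta$ of order $2$; combined with the fact that $\supp(A)$ omits many indices, this should force cascading vanishing.

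First I would pick a good reference index $\beta$. Because entries of $A$ indexed by $\alpha$ with $|\alpha|_0 = d$ (i.e.\ the index $(0,\dots,0)$), or more generally many "extreme" indices, are either forced or lie outside $\supp(A)$, I would choose $\beta = (0,0,\dots,0)$. Then $B_\beta$ is the order-$2$ subarray on indices with all coordinates in $\{1,2\}$, i.e.\ on $\{1,2\}^d$. By Construction~1, for such an index $\alpha$ (which has $|\alpha|_0 = 0$) the entry $a_\alpha$ is $0$ exactly when $|\alpha|_1,|\alpha|_2$ have the "zero-pattern" parity described in the construction (both even and $\neq d$ in the even-$d$ case; the corresponding odd-$d$ rule otherwise). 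Thus $\supp(B) \cap \{1,2\}^d$ is already heavily constrained: $b_\alpha = 0$ for all $\alpha \in \{1,2\}^d$ outside a small set. I would then use the zero-sum (line-sum-zero) equations \emph{within} the subcube $\{1,2\}^d$ — each line of $A$ in a direction $i$ through such an $\alpha$ has its third point at coordinate $0$ in position $i$, so $b_\alpha + b_{\alpha'} = -b_{\text{(that point)}}$, but I can instead first run the argument purely among the many forced zeros to pin down $B_\beta \equiv 0$, then invoke Lemma~\ref{minorrecover} to conclude $B \equiv 0$ on all of $I_3^d$.

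More concretely, the core combinatorial step is: show that the only zero-sum matrix supported on $\supp(A)$ and vanishing outside a single order-$2$ subcube is the zero matrix. For $d \geq 4$ the support pattern inside $\{1,2\}^d$ is sparse enough — indices with both $|\alpha|_1$ and $|\alpha|_2$ of the "wrong" parity are absent — that walking along lines (each move flips one coordinate between $1$ and $2$, changing $|\alpha|_1,|\alpha|_2$ by $\pm1$) and repeatedly using that a line-sum of two support-zero entries forces the third entry zero should propagate $b_\alpha = 0$ everywhere in the subcube; the hypothesis $d \geq 4$ is what gives enough room (for $d = 2,3$ the pattern degenerates, which matches the statement). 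Once $B_\beta = 0$, Lemma~\ref{minorrecover} gives $b_\alpha = \pm\sum_{\gamma \in \Gamma(\alpha,\beta)} b_\gamma = 0$ for every $\alpha$, so $B = 0$, contradicting that $B$ was a nonempty zero-sum set; hence $A$ is a vertex.

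The main obstacle I expect is the propagation step in the subcube $\{1,2\}^d$: one must verify that the absent indices (the forced zeros of $A$ restricted to $|\alpha|_0 = 0$) are positioned so that every remaining index can be reached by a chain of "two-out-of-three-on-a-line-are-zero" deductions, and in particular that no small nontrivial zero-sum configuration survives inside the subcube. This is a parity/connectivity argument on the Hamming graph of $\{1,2\}^d$ with a prescribed set of deleted vertices, and getting the bookkeeping right — especially handling the even-$d$ versus odd-$d$ cases and the boundary cases $|\alpha|_1 = d$ or $|\alpha|_2 = d$ separately — will be the delicate part. A cleaner alternative worth trying is to instead verify directly, via the incidence-matrix criterion of Proposition~\ref{vertexsupp}, that $\mathrm{rank}(L) = N(A)$; but since $N(A)$ grows, the zero-sum/Lemma~\ref{minorrecover} route is the one I would pursue.
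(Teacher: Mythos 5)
Your high-level plan matches the paper's: reduce to Proposition~\ref{JRtradecrit}, take a zero-sum matrix $B$ with $\supp(B)\subseteq\supp(A)$, and use Lemma~\ref{minorrecover} to reduce everything to a single order-$2$ subcube of $B$ (the paper uses $B_{2\cdots 2}$, you use $B_{0\cdots 0}$; by symmetry of Construction~1 either choice is fine). The serious gap is in the step you yourself flag as the main obstacle: showing the subcube entries vanish. Your picture of ``walking along lines inside $\{1,2\}^d$ and propagating zeros by two-out-of-three'' cannot work as stated, because no line of $I_3^d$ stays inside an order-$2$ subcube --- flipping a coordinate between $1$ and $2$ is not a line move, and every actual line through two subcube points exits through a third point with a $0$ coordinate. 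So pairwise propagation inside the subcube is not available; the constraints coupling subcube entries are not binary. What actually forces the subcube to vanish are the \emph{alternating-sum} identities of Lemma~\ref{minorrecover}: for each index $\alpha$ outside the subcube that lies outside $\supp(A)$, one obtains a linear equation $0=b_\alpha=(-1)^k\sum_{\gamma\in\Gamma(\alpha,\beta)}b_\gamma$ in the subcube entries. Organizing these by the type $(|\alpha|_0,|\alpha|_1,|\alpha|_2)$ and solving is the heart of the proof, and it is genuinely delicate.

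In particular the even-$d$ and odd-$d$ cases diverge much more than you anticipate. In the paper's even-$d$ argument one first shows the face entries $x_{d-k,k,0}$ are all multiples of a single parameter, then uses a second family of zero constraints (from the $|\alpha|_1=0$ face) to kill that parameter. In the odd-$d$ case the corresponding recurrence does \emph{not} collapse trivially; the solution involves the tangent (``zag'') numbers, and the contradiction is obtained by comparing growth rates of the resulting sequence against an independent bound from the $|\alpha|_0=0$ face. None of this is a ``parity/connectivity'' argument on the Hamming graph of the subcube, and you should not expect it to be; the linear system has to be solved, not merely traversed. So: right framework, right lemma, but the central linear-algebra argument is missing and the proposed substitute would fail.
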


\begin{proof}
By the definition, for $d = 2$ the matrix $A$ from Construction $1$ is
$$A = \left( \begin{array}{ccc}
1 & 0 & 0 \\
0 & \nicefrac{1}{3} & \nicefrac{2}{3} \\
0 & \nicefrac{2}{3} & \nicefrac{1}{3} \\
\end{array}
\right).$$
This matrix is not a permutation matrix, and so it is not a vertex of $\Omega_3^2$.

For $d = 3$  Construction $1$ gives matrix
$$A = \left( \begin{array}{ccc|ccc|ccc}
\nicefrac{1}{3}  &  \nicefrac{2}{3}  & 0  &  \nicefrac{2}{3} & 0  &  \nicefrac{1}{3}  & 0 &  \nicefrac{1}{3} &  \nicefrac{2}{3}   \\
\nicefrac{2}{3}  & 0  & \nicefrac{1}{3} & 0  &  \nicefrac{1}{3} &  \nicefrac{2}{3}  &  \nicefrac{1}{3}  &  \nicefrac{2}{3}  & 0 \\
0 & \nicefrac{1}{3} & \nicefrac{2}{3} &  \nicefrac{1}{3} &  \nicefrac{2}{3} & 0 &  \nicefrac{2}{3}  & 0 & \nicefrac{1}{3}   \\
\end{array}
\right).$$
This matrix  is a convex combination of two multidimensional permutations, and so it is not a vertex of $\Omega_3^3$.

Assume that $d \geq 4$ is even.  By Proposition~\ref{JRtradecrit}, if there is  a non-zero zero-sum  matrix $B$  such that $\supp(B) \subseteq \supp(A)$, then $A$ is  not a vertex.

Suppose that $B$ is a $d$-dimensional zero-sum matrix of order $3$ such that $\supp(B) \subseteq supp(A)$.
By Lemma~\ref{minorrecover}, all entries of the matrix $B$ can be recovered from values of some submatrix of order $2$, e.g., from  the submatrix $B_{2\cdots 2}$. 

For shortness, let $x_{r,s,t}$ denote the value $b_{\alpha}$, where $\alpha$ are such that $|\alpha|_0 = r$, $|\alpha|_1 = s$, and $|\alpha|_2 = t$.   Although in general $x_{r,s,t}$ can take several values, we show that the matrix $B$ is symmetric, and so $x_{r,s,t}$ are well defined.  By Lemma~\ref{minorrecover}, it is sufficient to show that all $x_{d-k, k, 0}$, $0\leq k \leq d$, are well defined.

From the definition of the matrix $A$ and $\supp(B) \subseteq supp(A)$, we have that $x_{d-k, k, 0} = 0$ for all odd $k$, $1 \leq k \leq d-1$. Let us show that for even $k$, $0 \leq k \leq d$, all $x_{d-k, k, 0}$ are well defined. Since $x_{0,d,0}$ and $x_{d, 0, 0}$ are values $b_{1 \cdots 1}$  and  $b_{0 \cdots 0}$, respectively,  they are well defined.  We prove by induction on $k$ that for all  even $k$, $0 < k < d$, all $x_{d-k, k, 0}$ are multiples of  $x_{0,d,0}$, and so they are well defined.   

 Since $\supp(B) \subseteq \supp(A)$, by  Lemma~\ref{minorrecover}, for all indices $\alpha$ such that $|\alpha|_0 = 0$, $|\alpha|_1 = d-k$ is even,   $|\alpha|_2 = k$ is even, $|\alpha|_1 , |\alpha|_2 \neq d$, it holds
\begin{equation} \label{minoreqvsp}
0 = b_\alpha =  (-1)^k \sum\limits_{\gamma \in \Gamma(\alpha,2\cdots 2)} b_{\gamma},
\end{equation}
where $\Gamma(\alpha,2\cdots 2)$ is the $k$-dimensional plane in the submatrix  $B_{2 \cdots 2}$ composed of all indices $\gamma$ satisfying  $\gamma_i = \alpha_i$ for all $i $ such that $\alpha_i \neq 2$.

Let  $\ell$ be even, $0 <\ell < d$, and suppose that  for every even $\ell'$, $0 \leq \ell' <\ell <d$, we proved that $x_{\ell', d- \ell', 0} $ is a multiple of $x_{0,d,0}$.  
From equation (\ref{minoreqvsp}), for every index $\alpha$ with $|\alpha|_0 = 0$, $|\alpha|_1 = d-\ell$, $|\alpha|_2 = \ell$ and the index $\beta$, obtained from $\alpha$ by changing all $2$s to $0$s  ($|\beta|_0 = \ell$, $|\beta|_1 = d-\ell$, $|\beta|_2 = 0$) we deduce 
$$ 0 = b_\alpha= - b_\beta - \sum\limits_{j = 0}^{\ell-1} {\ell \choose j} x_{j, d-j, 0} .$$ 
By the assumption, all $x_{j, d-j, 0}$ for $0 \leq j < \ell$ are zero or a multiple of $x_{0, d, 0}$. Therefore,  all  such $b_\beta$  are the same multiple of $x_{0, d, 0}$, and  all $x_{k, d-kl, 0}$ are well defined. 
 
Let us find the values  $x_{k, d-k, 0}$ now. 
Since $\supp(B) \subseteq \supp(A)$,  for all indices $\alpha$ such that $|\alpha|_1 = 0$,  $|\alpha|_0  = d-k$ is odd, $|\alpha|_2 = k$ is odd we have
\begin{equation} \label{minoreq}
0 = b_\alpha =  (-1)^k \sum\limits_{\gamma \in \Gamma(\alpha,2\cdots 2)} b_{\gamma},  
\end{equation}
where $\Gamma(\alpha,2\cdots 2)$ is the $k$-dimensional plane in the submatrix  $B_{2 \cdots 2}$ composed of all indices $\gamma$ satisfying  $\gamma_i = \alpha_i$ for all $i $ such that $\alpha_i \neq 2$.

If $k = 1$, then we have the equality   $ 0 = x_{d-1, 0, 1}  = - x_{d-1, 1, 0} - x_{d, 0 , 0}$.  Since $\supp(B) \subseteq \supp(A)$, it holds that $x_{d-1, 1, 0}  = 0$, and so  $x_{d, 0, 0}  = 0$.  

Given even $\ell$, $0 <\ell <d$, suppose that for every even $\ell'$, $0 \leq \ell' <\ell <d$, we proved that $x_{d-\ell', \ell', 0} = 0$. 
From equation~(\ref{minoreq}), we have
$$ 0 = x_{d - \ell -1, 0, \ell+ 1} = - \sum\limits_{j = 0}^{\ell +1} {\ell +1 \choose j} x_{d - j, j, 0}.$$ 
We noted before that for all odd $j$ it holds $x_{d - j, j, 0} = 0$. By the inductive assumption, for all even $j < \ell$ we have $x_{d - j, j, 0} = 0$. Thus equation~(\ref{minoreq}) means that $(\ell +1) x_{d-\ell, \ell, 0} = 0$, and so  $x_{d-\ell, \ell, 0} = 0$.

Therefore, we proved that in the submatrix  $B_{2\cdots 2}$ all entries are equal to $0$, except, possibly, $b_{1 \cdots 1} = x_{0,d,0}$. Using again Lemma~\ref{minorrecover} and the definition of the matrix $B$, we express
$$0 = x_{0, d-2, 2} = x_{2, d-2, 0} + 2 x_{1, d-1, 0} + x_{0, d, 0}.$$
Since we proved before that $x_{2, d-2, 0} = 0$ and $2 x_{1, d-1, 0} = 0$, we deduce that $x_{0, d, 0} = 0$ and thus the submatrix $B_{2 \cdots 2}$ has only zero entries. 
Application of Lemma~\ref{minorrecover} to the zero submatrix $B_{2 \cdots 2}$ gives that every zero-sum matrix $B$ such that $\supp(B) \subseteq \supp(A)$ has the empty support. Therefore, the matrix  $A$ is a vertex of $\Omega_n^d$.

Suppose now that $d \geq 5$ is odd.  Let us show  that  all zero-sum matrices $B$  such that $\supp(B) \subseteq \supp(A)$ have the empty support.  Again, with the help of  Lemma~\ref{minorrecover}  we will recover all entries of the matrix $B$ from the  submatrix $B_{2\cdots 2}$. 

As before, let  $x_{r,s,t}$ denote the value $b_{\alpha}$, where $\alpha$ is such that $|\alpha|_0 = r$, $|\alpha|_1 = s$, and $|\alpha|_2 = t$. We show later that all  $x_{r,s,t}$  are well defined.  From the definition of the matrix $A$ we have that $x_{d-k, k, 0} = 0$ for all even $k$, $k \neq 0$. 

Since $\supp(B) \subseteq \supp(A)$,   for all indices $\alpha$ such that $|\alpha|_1 = 0$, $|\alpha|_0 = d-k$ is even,  $|\alpha|_2 = k$, $k \neq d$ is odd, Lemma~\ref{minorrecover} implies that
\begin{equation} \label{minoreq2}
0 = b_\alpha =  (-1)^{k} \sum\limits_{\gamma \in \Gamma(\alpha,2\cdots 2)} b_{\gamma},
\end{equation}
where $\Gamma(\alpha,2\cdots 2)$ is  the $k$-dimensional plane in the submatrix  $B_{2 \cdots 2}$ composed by all indices $\gamma$ satisfying  $\gamma_i = \alpha_i$ for all $i $ such that $\alpha_i \neq 2$.

Acting inductively from $k = 1$  and using that  $x_{d-\ell, \ell, 0} = 0$ for all even $\ell \neq 0$, we   resolve this equation with respect to indices $\beta$ that obtained from $\alpha$ by replacing all $2$s by $1$s.  Then  for all odd $k \neq  d$  we obtain
$$x_{d-k, k, 0} = - \sum\limits_{i = 1}^{(k-1)/2} {k \choose 2i-1} x_{d-2i+1, 2i-1, 0} - x_{d, 0, 0}.$$

In particular, we proved that  all  $x_{d-k, k, 0}$  for  odd $k$,  $k \neq  d$,  are well defined and  are multiples of $x_{d, 0,0}$. In more detail,   $x_{d-2m+1, 2m-1, 0} = c_m x_{d, 0,0}$, where the sequence $c_m$ satisfies the recurrence 
$$ c_m = -1 - \sum\limits_{i = 1}^{m-1} {2m -1  \choose 2i -1} c_i. $$

The solution of this recurrence is $c_m = (-1)^m t_m$, where $t_m$ are  tangent or ``zag'' numbers (sequence A000182 in~\cite{oeis}), and  the sequence $t_m$ is increasing. Therefore, if $x_{d, 0, 0} \neq 0$, then  $|x_{d-k+2, k-2, 0}| < |x_{d-k, k, 0}|$ for all odd $k$, $3 \leq k < d$.

Using again Lemma~\ref{minorrecover} and taking into account that $x_{d-k, k, 0} = 0$ for all even  $k \neq 0$, we find that
\begin{gather*}
0 = x_{0, d-2, 2} =  x_{2, d-2, 0} + x_{0, d, 0}, \\
0 = x_{0, d-4, 4} =  x_{4,d-4, 0} + 6 x_{2,d-2, 0} + x_{0,d,0}.
\end{gather*}
Consequently, $x_{2, d-2, 0} = - x_{0, d, 0} $ and $x_{4, d-4, 0} = 5 x_{0, d, 0} $. Therefore, if $x_{0,d,0} \neq 0$, then $|x_{4, d-4, 0}| > |x_{2, d-2, 0}|$ which contradicts  the obtained property of the sequence  $x_{d-k, k, 0}$. 

Thus $x_{d, 0, 0} = x_{0,d,0} =0$. Since  $x_{d-k, k, 0}$ are multiples of $x_{d,0,0}$ for all odd  $k < d$, we proved that all entries of the submatrix $B_{2 \cdots 2}$ are equal to zero.

Application of Lemma~\ref{minorrecover} to the zero submatrix $B_{2 \cdots 2}$,  implies that every zero-sum   matrix $B$ with $\supp(B) \subseteq \supp(A)$ has the empty support. Therefore,  $A$ is a vertex of $\Omega_n^d$.
\end{proof}

\section{Discussion} \label{discussec}

While the present paper  focuses on the vertices of the polytope of polystochastic matrices, similar questions and methods are applicable for studying polytopes of fractional transversals in other hypergraphs. 

Let $H = (X, E)$ be a hypergraph with the vertex set $X$ and the set of hyperedges $E$ (where a hyperedge is some subset of vertices). A transversal  is a collection of vertices such that for every $e \in E$ there is a unique $x \in X$ belonging to $e$. A \textit{fractional transversal} is an assignment $A$ of weights $a_x \geq 0$ to vertices $x \in X$ such that    for every $e \in E$ it holds that $\sum\limits_{x \in e} a_x = 1$.  Note that a transversal can be considered as a fractional transversal where all vertices have weights $0$ or $1$

If $H$ is a $d$-uniform hypergraph (i.e., each hyperedge of $H$ consists of $d$ vertices), then  the uniform assignment $A$ (with $a_x = 1/d$ for all $x \in X$) is a fractional transversal. However, not every $d$-uniform hypergraph $H$ has a transversal. 
 
It is straightforward to observe that the set $\Omega(H)$ of all fractional transversals $A$ of a hypergraph $H$ forms a convex polytope in $\mathbb{R}^{|X|}$ defined by a system of linear inequalities  derived from the  hyperedge-vertex incidence matrix $L$ of  $H$.  Thus,  the problem of funding  vertices of  $\Omega (H)$ is a specific instance   of a  classical  linear programming problem (see, e.g.,~\cite{schrijv.thLP} and~\cite{schrijver.combopt}). 
 
By employing methods similar to those in~\cite{JurRys.stochmatr} and~\cite{my.obzor}, one can demonstrate that each vertex of $\Omega(H)$ is uniquely defined by the support and  can derive an upper bound on the size of support in terms of  the dimension of $\Omega(H)$.  Given a hypergraph $H$, a modification of Algorithm~1 can completely enumerate the vertices of $\Omega(H)$. Additionally, certain  constructions of vertices of $\Omega_n^d$ can be generalized to construct  vertices of $\Omega(H)$. 

In conclusion, we  highlight several families of hypergraphs $H$ whose transversals correspond to notable combinatorial designs:

\begin{itemize}
\item Hypergraphs $M_{k,n,d}$ with the vertex set $I_n^d$ and hyperedges equal to $k$-dimensional planes. The polytope $\Omega_n^d$ of polystochastic matrices is the polytope $\Omega (M_{1, n, d})$. Transversals  in the hypergraph $M_{k,n,d}$ are maximum distance separable (MDS) codes with distance $k+1$. Specifically, every transversal in $M_{d-1,n,d}$ is a diagonal in a $d$-dimensional matrix of order $n$. Furthermore, a matrix $A \in \Omega (M_{k, n, d}) $ with rational entries  corresponds to an orthogonal array $OA(\Delta n^{d-k},d,n,d-k)$, where $\Delta \in \mathbb{N}$ is chosen so that $\Delta \cdot A$ is an integer matrix.  

\item  Hypergraphs $C_{r,n,d}$ with the vertex set $I_n^d$ and hyperedges equal to balls of radius $r$ (in the Hamming metric). A transversal  in the hypergraph $C_{r,n,d}$ is a perfect code with distance $2r+1$.

\item Hypergraphs $W_{k,t,n}$ with the vertex set consisting of all $k$-element subsets of an $n$-element set and a hyperedge is a collection of $k$-element subsets containing the same $t$-element set ($t < k <n$). A transversal in the hypergraph $W_{k,t,n}$ is a $t$-$(n,k,1)$-design. 

\item A hypergraph $C_G$ with the vertex set equal to the vertex set of some graph $G$, the hyperedges of $C_G$ are the maximal cliques of $G$. Transversals in  $C_G$ correspond to the maximal independent sets of $G$. 
\end{itemize}

\section*{Acknowledgments}

The results of Sections 3 were funded by the Russian Science Foundation under grant No 22-21-00202. The other work  was carried out within the framework of the state contract of the Sobolev Institute of Mathematics (project no. FWNF-2022-0017).

The author is grateful to Denis Krotov for assistance in implementing Algorithm 1 for enumeration of vertices of $\Omega_4^3$, finding their equivalence classes,  determining  orders of the  automorphism groups and counting permanents.  We also thank  Sergey Vladimirov  for help with the first version of record~\cite{vert34.data}. The author is also grateful to Sergey Avgustinovich, Ivan Mogilnykh, and Anastasia Vasil'eva for discussions during the early stages of this work at a workshop in February 2021.

\section*{Appendix. Vertices of the polytope $\Omega_3^4$}

In this section we list all 21 vertices of $\Omega_3^4$ identified through the implementation of Algorithm~1, as described in Section~\ref{vert43sec}. We  provide a representative for each equivalence class,  calculate the permanent and the size of the support for each vertex, and verify whether the vertex is symmetric.

\#1. The multidimensional permutation (symmetric matrix):

$$
A_1 = \left( 
\begin{array}{ccc|ccc|ccc}
1 & 0 & 0 & 0 & 0 & 1 & 0 & 1 & 0 \\
0 & 0 & 1 & 0 & 1 & 0 & 1 & 0 & 0 \\
0 & 1 & 0 & 1 & 0 & 0 & 0 & 0 & 1 \\
\hline
0 & 0 & 1 & 0 & 1 & 0 & 1 & 0 & 0 \\
0 & 1 & 0 & 1 & 0 & 0 & 0 & 0 & 1 \\
1 & 0 & 0 & 0 & 0 & 1 & 0 & 1 & 0 \\
\hline
0 & 1 & 0 & 1 & 0 & 0 & 0 & 0 & 1 \\
1 & 0 & 0 & 0 & 0 & 1 & 0 & 1 & 0 \\
0 & 0 & 1 & 0 & 1 & 0 & 1 & 0 & 0 \\
\end{array}
\right)
$$
The size of its  support is $N(A_1) = 27$ and the permanent $\per (A_1) = 27$. 

\#2. 
$$
A_2 = \frac{1}{2} \left( 
\begin{array}{ccc|ccc|ccc}
2 & 0 & 0 & 0 & 2 & 0 & 0 & 0 & 2 \\
0 & 1 & 1 & 1 & 0 & 1 & 1 &  1 & 0 \\
0 & 1 & 1 & 1 & 0 & 1 & 1 & 1 & 0 \\
\hline
0 & 1 & 1 & 1 & 0 & 1 & 1 & 1 & 0 \\
2 & 0 & 0 & 0 & 1 & 1  & 0 & 1 & 1 \\
0 & 1 & 1 & 1 & 1 & 0 & 1 & 0 & 1 \\
\hline
0 & 1 & 1  & 1 & 0 & 1 & 1 & 1 & 0 \\
0 & 1 & 1 & 1 & 1 & 0 & 1 & 0 & 1 \\
2 & 0 & 0 & 0 & 1 & 1 & 0 & 1 & 1\\
\end{array}
\right)
$$
The size of its  support is $N(A_2) = 49$ and the permanent $\per (A_2) = 10.5$.

\#3. The dot product of the $3$-dimensional permutation of order $3$ and the vertex $V$ (see Theorem~\ref{dotconstruction}):

$$
A_3 = \frac{1}{2} \left( 
\begin{array}{ccc|ccc|ccc}
2 & 0 & 0 & 0 & 1 & 1 & 0 & 1 & 1 \\
0 & 1 & 1 & 1 & 0 & 1 & 1 &  1 & 0 \\
0 & 1 & 1 & 1 & 1 & 0 & 1 & 0 & 1 \\
\hline
0 & 1 & 1 & 2 & 0 & 0 & 0 & 1 & 1\\
1 & 1 & 0 & 0 & 1 & 1 & 1 & 0 & 1 \\
1 & 0 & 1 & 0 & 1 & 1 & 1 & 1 & 0\\
\hline
0 & 1 & 1 & 0 & 1 & 1 & 2 & 0 & 0 \\
1 & 0 & 1 & 1 & 1 & 0 & 0 & 1 & 1\\
1 & 1 & 0 & 1 & 0 & 1 & 0 & 1 & 1 \\
\end{array}
\right)
$$
The size of its  support is $N(A_3) = 51$ and the permanent $\per (A_3) = 9$. 

\#4.  The symmetric matrix
$$
A_4 = \frac{1}{2} \left( 
\begin{array}{ccc|ccc|ccc}
2 & 0 & 0 & 0 & 1 & 1 & 0 & 1 & 1\\
0 & 1 & 1 & 1 & 0 & 1 & 1 & 1 & 0 \\
0 & 1 & 1 & 1 & 1 & 0 & 1 & 0 & 1 \\
\hline
0 & 1 & 1 & 1& 0 & 1 & 1 & 1 & 0 \\
1 & 0 & 1 & 0 & 2 & 0 & 1 & 0 & 1\\
1 & 1 & 0 & 1 & 0 & 1 & 0 & 1 & 1\\
\hline
0 & 1 & 1 & 1 & 1 & 0 & 1 & 0 & 1 \\
1 & 1 & 0 & 1 & 0 & 1 & 0 & 1 & 1 \\
1 & 0 & 1 & 0 & 1 & 1 & 1& 1 & 0 \\
\end{array}
\right)
$$
The size of its  support is $N(A_4) = 52$ and the permanent $\per (A_4) = 8.25$.  

\#5. 
$$
A_5 = \frac{1}{3} \left( 
\begin{array}{ccc|ccc|ccc}
3 & 0 & 0 & 0 & 1 & 2 & 0 & 2 & 1 \\
0 & 2 & 1 & 2 & 1 & 0 & 1 &  0 & 2 \\
0 & 1 & 2 & 1 & 1 & 1 & 2 & 1 & 0 \\
\hline
0 & 2 & 1 & 2 & 1 & 0 & 1 &  0 & 2 \\
1 & 1 & 1 & 1 & 0 & 2 & 1 & 2 & 0 \\
2 & 0 & 1 & 0 & 2 & 1 & 1 & 1 & 1 \\
\hline
0 & 1 & 2 & 1 & 1 & 1 & 2 & 1 & 0 \\
2 & 0 & 1 & 0 & 2 & 1 & 1 & 1 & 1 \\
1 & 2 & 0 & 2 & 0 & 1 & 0 & 1 & 2 \\
\end{array}
\right)
$$
The size of its  support is $N(A_5) = 58$ and the permanent $\per (A_5) =  \frac{256}{27} \approx 9.48$.

\#6. 
$$
A_6 =  \frac{1}{3} \left( 
\begin{array}{ccc|ccc|ccc}
3 & 0 & 0 & 0 & 2 & 1 & 0 & 1 & 2 \\
0 & 2 & 1 & 1 & 0 & 2 & 2 & 1 & 0 \\
0 & 1 & 2 & 2 & 1 & 0 & 1 & 1 & 1 \\
\hline
0 & 2 & 1 & 2 & 0 & 1 & 1 & 1 & 1 \\
2 & 0 & 1 & 1 & 1 & 1 & 0 & 2 & 1\\
1 & 1 & 1 & 0 & 2 & 1 & 2 & 0 & 1 \\
\hline
0 & 1 & 2 & 1 & 1 & 1 & 2 & 1 & 0 \\
1 & 1 & 1 & 1& 2 & 0 & 1 & 0 & 2\\
2 & 1 & 0 & 1 & 0 & 2 & 0 & 2 & 1 \\
\end{array}
\right)
$$
The size of its  support is $N(A_6) = 59$ and the permanent $\per (A_6) =  \frac{85}{9} \approx 9.44$.

\#7.  The symmetric matrix
$$
A_7 = \frac{1}{3} \left( 
\begin{array}{ccc|ccc|ccc}
3 & 0 & 0 & 0 & 2 & 1 & 0 & 1 & 2 \\
0 & 2 & 1 & 2 & 0 & 1 & 1 &  1 & 1 \\
0 & 1 & 2 & 1 & 1 & 1 & 2 & 1 & 0 \\
\hline
0 & 2 & 1 & 2 & 0 & 1 & 1 &  1 & 1 \\
2 & 0 & 1 & 0 & 1 & 2 & 1 & 2 & 0 \\
1 & 1 & 1 & 1 & 2 & 0 & 1 & 0 & 2 \\
\hline
0 & 1 & 2 & 1 & 1 & 1 & 2 & 1 & 0 \\
1 & 1 & 1 & 1 & 2 & 0 & 1 & 0 & 2 \\
2 & 1 & 0 & 1 & 0 & 2 & 0 & 2 & 1 \\
\end{array}
\right)
$$
The size of its  support is $N(A_7) = 59$ and the permanent $\per (A_7) =  \frac{85}{9} \approx 9.44$.

\#8. 
$$
A_8 =  \frac{1}{4}\left( 
\begin{array}{ccc|ccc|ccc}
4 & 0 & 0 & 0 & 2 & 2 & 0 & 2 & 2 \\
0 & 2 & 2 & 2 & 0 & 2 & 2 &  2 & 0 \\
0 & 2 & 2 & 2 & 2 & 0 & 2 & 0 & 2 \\
\hline
0 & 3 & 1 & 3 & 0 & 1 & 1 &  1 & 2 \\
3 & 0 & 1 & 0 & 2 & 2 & 1 & 2 & 1 \\
1 & 1 & 2 & 1 & 2 & 1 & 2 & 1 & 1 \\
\hline
0 & 1 & 3 & 1 & 2 & 1 & 3 & 1 & 0 \\
1 & 2 & 1 & 2 & 2 & 0 & 1 & 0 & 3 \\
3 & 1 & 0 & 1 & 0 & 3 & 0 & 3 & 1 \\
\end{array}
\right)
$$
The size of its  support is $N(A_8) = 60$ and the permanent $\per (A_8) =  \frac{77}{8} = 9.625$. 

\#9.  The symmetric matrix
$$
A_9 = \frac{1}{4} \left( 
\begin{array}{ccc|ccc|ccc}
0 & 2 & 2 & 2 & 0 & 2 & 2 & 2 & 0 \\
2 & 0 & 2 & 0 & 3 & 1 & 2 &  1 & 1 \\
2 & 2 & 0 & 2 & 1 & 1 & 0 & 1 & 3 \\
\hline
2 & 0 & 2 & 0 & 3 & 1 & 2 &  1 & 1 \\
0 & 3 & 1 & 3 & 1 & 0 & 1 & 0 & 3 \\
2 & 1 & 1 & 1 & 0 & 3 & 1 & 3 & 0 \\
\hline
2 & 2 & 0 & 2 & 1 & 1 & 0 & 1 & 3 \\
2 & 1 & 1 & 1 & 0 & 3 & 1 & 3 & 0 \\
0 & 1 & 3 & 1 & 3 & 0 & 3 & 0 & 1 \\
\end{array}
\right)
$$
The size of its  support is $N(A_9) = 60$ and the permanent $\per (A_9) =  \frac{39}{4} = 9.75$.  

\#10. 
$$
A_{10} =  \frac{1}{3} \left( 
\begin{array}{ccc|ccc|ccc}
1 & 1 & 1 & 1 & 0 & 2 & 1 & 2 & 0 \\
1 & 1 & 1 & 2 & 1 & 0 & 0 &  1 & 2 \\
1 & 1 & 1 & 0 & 2 & 1 & 2 & 0 & 1 \\
\hline
2 & 0 & 1 & 1 & 2 & 0 & 0 &  1 & 2 \\
0 & 1 & 2 & 1 & 1 & 1 & 2 & 1 & 0 \\
1 & 2 & 0 & 1 & 0 & 2 & 1 & 1 & 1 \\
\hline
0 & 2 & 1 & 1 & 1 & 1 & 2 & 0 & 1 \\
2 & 1 & 0  & 0 & 1 & 2 & 1 & 1 & 1 \\
1 & 0 & 2 & 2 & 1 & 0 & 0 & 2 & 1 \\
\end{array}
\right)
$$
The size of its  support is $N(A_{10}) = 61$ and the permanent $\per (A_10) =  \frac{236}{27} \approx 8.74$.  

This vertex of $\Omega_3^4$  was also found in~\cite{AhLoHem.polycones}.

\#11. 
$$
A_{11} = \frac{1}{3} \left( 
\begin{array}{ccc|ccc|ccc}
1 & 1 & 1 & 1 & 0 & 2 & 1 & 2 & 0 \\
1 & 1 & 1 & 2 & 1 & 0 & 0 &  1 & 2 \\
1 & 1 & 1 & 0 & 2 & 1 & 2 & 0 & 1 \\
\hline
1 & 1 & 1 & 0 & 2 & 1 & 2 &  0 & 1 \\
1 & 0 & 2 & 1 & 1 & 1 & 1 & 2 & 0 \\
1 & 2 & 0 & 2 & 0 & 1 & 0 & 1 & 2 \\
\hline
1 & 1 & 1 & 2 & 1 & 0 & 0 & 1 & 2 \\
1 & 2 & 0  & 0 & 1 & 2 & 2 & 0 & 1 \\
1 & 0 & 2 & 1 & 1 & 1 & 1 & 2 & 0 \\
\end{array}
\right)
$$
The size of its  support is $N(A_{11}) = 61$ and the permanent $\per (A_{11}) =  \frac{236}{27} \approx 8.74$. 

\#12. 
$$
A_{12} = \frac{1}{4} \left( 
\begin{array}{ccc|ccc|ccc}
4 & 0 & 0 & 0 & 2 & 2 & 0 & 2 & 2 \\
0 & 2 & 2 & 2 & 0 & 2 & 2 &  2 & 0 \\
0 & 2 & 2 & 2 & 2 & 0 & 2 & 0 & 2 \\
\hline
0 & 1 & 3 & 3 & 1 & 0 & 1 &  2 & 1 \\
3 & 1 & 0 & 0 & 2 & 2 & 1 & 1 & 2 \\
1 & 2 & 1 & 1 & 1 & 2 & 2 & 1 & 1 \\
\hline
0 & 3 & 1 & 1 & 1 & 2 & 3 & 0 & 1 \\
1 & 1 & 2  & 2 & 2 & 0 & 1 & 1 & 2 \\
3 & 0 & 1 & 1 & 1 & 2 & 0 & 3 & 1 \\
\end{array}
\right)
$$
The size of its  support is $N(A_{12}) = 62$ and the permanent $\per (A_{12}) =  \frac{75}{8} =  9.375$. 

\#13.  The symmetric matrix
$$
A_{13} = \frac{1}{5} \left( 
\begin{array}{ccc|ccc|ccc}
5 & 0 & 0 & 0 & 3 & 2 & 0 & 2 & 3 \\
0 & 3 & 2 & 3 & 0 & 2 & 2 &  2 & 1 \\
0 & 2 & 3 & 2 & 2 & 1 & 3 & 1 & 1 \\
\hline
0 & 3 & 2 & 3 & 0 & 2 & 2 &  2 & 1 \\
3 & 0 & 2 & 0 & 2 & 3 & 2 & 3 & 0 \\
2 & 2 & 1 & 2 & 3 & 0 & 1 & 0 & 4 \\
\hline
0 & 2 & 3 & 2 & 2 & 1 & 3 & 1 & 1 \\
2 & 2 & 1  & 2 & 3 & 0 & 1 & 0 & 4 \\
3 & 1 & 1 & 1 & 0 & 4 & 1 & 4 & 0 \\
\end{array}
\right)
$$
The size of its  support is $N(A_{13}) = 62$ and the permanent $\per (A_{13}) =  \frac{1194}{125} =  9.552$. 

\#14. 
$$
A_{14} =  \frac{1}{4} \left( 
\begin{array}{ccc|ccc|ccc}
2 & 1 & 1 & 2 & 0 & 2 & 0 & 3 & 1 \\
1 & 2 & 1 & 2 & 2 & 0 & 1 & 0 & 3 \\
1 & 1 & 2 & 0 & 2 & 2 & 3 & 1 & 0 \\
\hline
2 & 2 & 0 & 1 & 1 & 2 & 1 &  1 & 2 \\
0 & 2 & 2 & 1 & 1 & 2 & 3 & 1 & 0 \\
2 & 0 & 2 & 2 & 2 & 0 & 0 & 2 & 2 \\
\hline
0 & 1 & 3 & 1 & 3 & 0 & 3 & 0 & 1 \\
3 & 0 & 1  & 1 & 1 & 2 & 0 & 3 & 1 \\
1 & 3 & 0 & 2 & 0 & 2 & 1 & 1 & 2 \\
\end{array}
\right)
$$
The size of its  support is $N(A_{14}) = 62$ and the permanent $\per (A_{14}) =  \frac{73}{8} =  9.125$. 

\#15. 
$$
A_{15} = \frac{1}{3} \left( 
\begin{array}{ccc|ccc|ccc}
1 & 1 & 1 & 1 & 0 & 2 & 1 & 2 & 0 \\
1 & 1 & 1 & 0 & 2 & 1 & 2 & 0 & 1 \\
1 & 1 & 1 & 2 & 1 & 0 & 0 & 1 & 2 \\
\hline
1 & 1 & 1 & 1 & 1 & 1 & 1 &  1 & 1 \\
1 & 0 & 2 & 2 & 1 & 0 & 0 & 2 & 1 \\
1 & 2 & 0 & 0 & 1 & 2 & 2 & 0 & 1 \\
\hline
1 & 1 & 1 & 1 & 2 & 0 & 1 & 0 & 2 \\
1 & 2 & 0  & 1 & 0 & 2 & 1 & 1 & 1 \\
1 & 0 & 2 & 1 & 1 & 1 & 1 & 2 & 0 \\
\end{array}
\right)
$$
The size of its  support is $N(A_{15}) = 63$ and the permanent $\per (A_{15}) =  \frac{26}{3} \approx  8.667$. 

\#16. 
$$
A_{16} = \frac{1}{4} \left( 
\begin{array}{ccc|ccc|ccc}
2 & 1 & 1 & 2 & 0 & 2 & 0 & 3 & 1 \\
1 & 2 & 1 & 2 & 2 & 0 & 1 & 0 & 3 \\
1 & 1 & 2 & 0 & 2 & 2 & 3 & 1 & 0 \\
\hline
2 & 0 & 2 & 1 & 3 & 0 & 1 &  1 & 2 \\
0 & 2 & 2 & 1 & 1 & 2 & 3 & 1 & 0 \\
2 & 2 & 0 & 2 & 0 & 2 & 0 & 2 & 2 \\
\hline
0 & 3 & 1 & 1 & 1 & 2 & 3 & 0 & 1 \\
3 & 0 & 1  & 1 & 1 & 2 & 0 & 3 & 1 \\
1 & 1 & 2 & 2 & 2 & 0 & 1 & 1 & 2 \\
\end{array}
\right)
$$
The size of its  support is $N(A_{16}) = 63$ and the permanent $\per (A_{16}) =  9 $. 

\#17. 
$$
A_{17} = \frac{1}{5} \left( 
\begin{array}{ccc|ccc|ccc}
0 & 4 & 1 & 4 & 0 & 1 & 1 & 1 & 3 \\
4 & 0 & 1 & 0 & 3 & 2 & 1 &  2 & 2 \\
1 & 1 & 3 & 1 & 2 & 2 & 3 & 2 & 0 \\
\hline
3 & 1 & 1 & 1 & 2 & 2 & 1 &  2 & 2 \\
1 & 2 & 2 & 2 & 0 & 3 & 2 & 3 & 0 \\
1 & 2 & 2 & 2 & 3 & 0 & 2 & 0 & 3 \\
\hline
2 & 0 & 3 & 0 & 3 & 2 & 3 & 2 & 0 \\
0 & 3 & 2  & 3 & 2 & 0 & 2 & 0 & 3 \\
3 & 2 & 0 & 2 & 0 & 3 & 0 & 3 & 2 \\
\end{array}
\right)
$$
The size of its  support is $N(A_{17}) = 63$ and the permanent $\per (A_{17}) =  \frac{1074}{125} =  8.592$. 

\#18. 
$$
A_{18} = \frac{1}{5} \left( 
\begin{array}{ccc|ccc|ccc}
0 & 4 & 1 & 4 & 0 & 1 & 1 & 1 & 3 \\
4 & 0 & 1 & 0 & 3 & 2 & 1 &  2 & 2 \\
1 & 1 & 3 & 1 & 2 & 2 & 3 & 2 & 0 \\
\hline
4 & 1 & 0 & 0 & 2 & 3 & 1 &  2 & 2 \\
0 & 2 & 3 & 3 & 0 & 2 & 2 & 3 & 0 \\
1 & 2 & 2 & 2 & 3 & 0 & 2 & 0 & 3 \\
\hline
1 & 0 & 4 & 1 & 3 & 1 & 3 & 2 & 0 \\
1 & 3 & 1  & 2 & 2 & 1 & 2 & 0 & 3 \\
3 & 2 & 0 & 2 & 0 & 3 & 0 & 3 & 2 \\
\end{array}
\right)
$$
The size of its  support is $N(A_{18}) = 63$ and the permanent $\per (A_{18}) =  \frac{1141}{125} =  9.128$. 

\#19. The symmetric matrix 
$$
A_{19} = \frac{1}{6} \left( 
\begin{array}{ccc|ccc|ccc}
0 & 5 & 1 & 5 & 0 & 1 & 1 & 1 & 4 \\
5 & 0 & 1 & 0 & 3 & 3 & 1 &  3 & 2 \\
1 & 1 & 4 & 1 & 3 & 2 & 4 & 2 & 0 \\
\hline
5 & 0 & 1 & 0 & 3 & 3 & 1 &  3 & 2 \\
0 & 3 & 3 & 3 & 0 & 3 & 3 & 3 & 0 \\
1 & 3 & 2 & 3 & 3 & 0 & 2 & 0 & 4 \\
\hline
1 & 1 & 4 & 1 & 3 & 2 & 4 & 2 & 0 \\
1 & 3 & 2  & 3 & 3 & 0 & 2 & 0 & 4 \\
4 & 2 & 0 & 2 & 0 & 4 & 0 & 4 & 2 \\
\end{array}
\right)
$$
The size of its  support is $N(A_{19}) = 63$ and the permanent $\per (A_{19}) =  \frac{85}{9} \approx  9.444$.  

\#20. 
$$
A_{20} =  \frac{1}{4}\left( 
\begin{array}{ccc|ccc|ccc}
1 & 2 & 1 & 2 & 1 & 1 & 1 & 1 & 2 \\
2 & 1 & 1 & 1 & 0 & 3 & 1 & 3 & 0 \\
1 & 1 & 2 & 1 & 3 & 0 & 2 & 0 & 2 \\
\hline
1 & 2 & 1 & 0 & 1 & 3 & 3 &  1 & 0 \\
2 & 0 & 2 & 1 & 3 & 0 & 1 & 1 & 2 \\
1 & 2 & 1 & 3 & 0 & 1 & 0 & 2 & 2 \\
\hline
2 & 0 & 2 & 2 & 2 & 0 & 0 & 2 & 2 \\
0 & 3 & 1  & 2 & 1 & 1 & 2 & 0 & 2 \\
2 & 1 & 1 & 0 & 1 & 3 & 2 & 2 & 0 \\
\end{array}
\right)
$$
The size of its  support is $N(A_{20}) = 64$ and the permanent $\per (A_{20}) =  \frac{145}{16} = 9.0625$. 

\#21. 
$$
A_{21} = \frac{1}{5}\left( 
\begin{array}{ccc|ccc|ccc}
1 & 2 & 2 & 3 & 1 & 1 & 1 & 2 & 2 \\
2 & 1 & 2 & 1 & 4 & 0 & 2 &  0 & 3 \\
2 & 2 & 1 & 1 & 0 & 4 & 2 & 3 & 0 \\
\hline
2 & 3 & 0 & 0 & 2 & 3 & 3 &  0 & 2 \\
0 & 2 & 3 & 4 & 0 & 1 & 1 & 3 & 1 \\
3 & 0 & 2 & 1 & 3 & 1 & 1 & 2 & 2 \\
\hline
2 & 0 & 3 & 2 & 2 & 1 & 1 & 2 & 1 \\
3 & 2 & 0  & 0 & 1 & 4 & 2 & 2 & 1 \\
0 & 3 & 2 & 3 & 2 & 0 & 2 & 0 & 3 \\
\end{array}
\right)
$$
The size of its  support is $N(A_{21}) = 65$ and the permanent $\per (A_{21}) =  \frac{223}{25} =  8.92$.

\begin{bibdiv}
    \begin{biblist}[\normalsize]
    \bibselect{biblio}
    \end{biblist}
    \end{bibdiv}

\end{document}